\newcommand\convtri[4]{
	\draw[line width = 0.5mm] (axis cs:#1,#2) -- (axis cs:10*#1,#2) -- (axis cs:10*#1,#2 * 0.1^#3) -- cycle;
	\node[right] at (axis cs:10*#1,#2 * 0.1^0.5^#3) {$#4$};}
\newcommand\convtriinv[4]{
	\draw[line width = 0.5mm] (axis cs:#1,#2) -- (axis cs:#1,#2 * 0.1^#3) -- (axis cs:10*#1,#2 * 0.1^#3) -- cycle;
	\node[left] at (axis cs:#1,#2 * 0.1^0.5^#3) {$#4$};}
\newtheorem{theorem}{Theorem}[section]
\newtheorem{algorithm}[theorem]{Algorithm}
\newtheorem{proposition}[theorem]{Proposition}
\newtheorem{corollary}[theorem]{Corollary}
\theoremstyle{definition}
\theoremstyle{remark}
\newtheorem{remark}[theorem]{Remark}
\numberwithin{equation}{section}
\newcommand{\R}{\mathbb R}
\newcommand{\N}{\mathbb N}
\newcommand{\identity}{\mathrm{Id}}
\newcounter{mylistcounter}
\renewcommand{\themylistcounter}{{\bf (\Roman{mylistcounter})}}
\DeclareMathOperator{\ran}{ran}
\DeclareMathOperator{\diam}{diam}
\DeclareMathOperator*{\argmin}{argmin}   
\DeclareMathOperator{\meas}{meas}
\DeclareSymbolFont{tipa}{T3}{cmr}{m}{n}
\DeclareMathAccent{\invbreve}{\mathalpha}{tipa}{16}
\DeclareMathOperator{\divv}{div}
\newcommand{\be}{\begin{equation}}
\newcommand{\ee}{\end{equation}}
\newcommand{\cP}{{\mathcal P}}
\newcommand{\cS}{{\mathcal S}}
\newcommand{\cI}{{\mathcal I}}
\newcommand{\cJ}{{\mathcal J}}
\newcommand{\cK}{{\mathcal K}}
\newcommand{\RT}{\mathit{RT}}
\newcommand{\TT}{\mathcal T}
\newcommand{\PP}{\mathcal P}
\newcommand*\patchAmsMathEnvironmentForLineno[1]{%
  \expandafter\let\csname old#1\expandafter\endcsname\csname #1\endcsname
  \expandafter\let\csname oldend#1\expandafter\endcsname\csname end#1\endcsname
  \renewenvironment{#1}%
     {\linenomath\csname old#1\endcsname}%
     {\csname oldend#1\endcsname\endlinenomath}}%
\newcommand*\patchBothAmsMathEnvironmentsForLineno[1]{%
  \patchAmsMathEnvironmentForLineno{#1}%
  \patchAmsMathEnvironmentForLineno{#1*}}%
\title{Improved rates for a space-time FOSLS of parabolic PDEs}
\date{\today}
\author{Gregor Gantner} 
\address{Institute of Analysis and Scientific Computing, TU Wien, Wiedner Hauptstra{\ss}e 8-10, 1040 Vienna, Austria.}
\email{gregor.gantner@asc.tuwien.ac.at}
\author{Rob Stevenson}
\address{Korteweg-de Vries (KdV) Institute for Mathematics, University of Amsterdam, P.O. Box 94248, 1090 GE Amsterdam, The Netherlands.}
\email{r.p.stevenson@uva.nl}
\thanks{The first author has been supported by the Austrian Science Fund (FWF) under grant J4379-N. The second author has been supported by NSF Grant DMS 172029.}
\subjclass[2010]{%
35F35, 
65M12, 
65M15, 
65M50, 
65M60
}
\keywords{Parabolic PDEs, space-time FOSLS, error bounds, commuting diagram}
\begin{document}
\maketitle

\begin{abstract} 
We consider the first-order system space-time formulation of the heat equation introduced in \cite{23.5}, and analyzed in \cite{75.257,75.28}, with solution components
$(u_1,{\bf u}_2)=(u,-\nabla_{\bf x} u)$. The corresponding operator is boundedly invertible between a Hilbert space $U$ and a Cartesian product of $L_2$-type spaces, which facilitates easy first-order system least-squares (FOSLS) discretizations.
Besides $L_2$-norms of $\nabla_{\bf x} u_1$ and ${\bf u}_2$, the (graph) norm of $U$ contains the $L_2$-norm of $\partial_t u_1 +\divv_{\bf x} {\bf u}_2$.
When applying standard finite elements w.r.t.~simplicial partitions of the space-time cylinder, estimates of the approximation error w.r.t.~the latter norm require higher-order smoothness of ${\bf u}_2$. In experiments for both uniform and adaptively refined partitions, this manifested itself 
in disappointingly low convergence rates for non-smooth solutions $u$.

In this paper, we construct finite element spaces w.r.t.~prismatic partitions. They come with 
a quasi-interpolant that satisfies a near commuting diagram in the sense that, apart from some harmless term, the aforementioned error depends exclusively on the smoothness of $\partial_t u_1 +\divv_{\bf x} {\bf u}_2$, i.e., of the forcing term $f=(\partial_t-\Delta_x)u$.
Numerical results show significantly improved convergence rates.
\end{abstract}

\section{Introduction}
\subsection{Space-time variational formulation of the heat equation} \label{sec:2ndorder} This paper is about the numerical solution of second-order parabolic evolution equations in a simultaneous space-time variational formulation.
For convenience only we restrict the discussion to the model problem of the heat equation with homogeneous Dirichlet (lateral) boundary conditions on the time-space cylinder $Q:=I \times \Omega$, where $I:=(0,T)$, and $\Omega \subset \R^d$ is a bounded Lipschitz domain.
The common space-time variational formulation of finding $u$ that, for given data $f,u_0$, satisfies
\begin{align}\label{eq:heat}
\left\{
\begin{array}{rcl} \displaystyle \iint_Q \partial_t u v +\nabla_{\bf x} u \cdot \nabla_{\bf x} v\,dt\,d{\bf x}&=& \displaystyle \iint_Q  f v \,dt\,d{\bf x}\\
\displaystyle \int_\Omega u(0,\cdot) w \,d{\bf x} &=& \displaystyle \displaystyle \int_\Omega u_0 w \,d{\bf x}
\end{array}\right.
\end{align}
for all `test functions' $v,w$, induces a boundedly invertible operator from $X$ to $Y' \times L_2(\Omega)$, where 
$$
X:=L_2(I;H^1_0(\Omega)) \cap H^1(I;H^{-1}(\Omega)), \quad Y:=L_2(I;H^1_0(\Omega))
$$
(see, e.g., \cite{63} or \cite{314.9}). Already because $X \neq Y \times L_2(\Omega)$, the corresponding bilinear form is non-coercive, so that one cannot resort to simple conforming Galerkin discretizations.
See, however, \cite{249.2,169.05} for methods that apply in the case of a homogeneous initial condition.

\subsection{Minimal residual discretization} The approach introduced by R. Andreev in \cite{11} is to consider minimal residual (or least squares) discretizations. 
They amount to minimizing the residual over a selected finite-dimensional trial space $X^\delta \subset X$.
Since the residual of the PDE lives in the dual space $Y'$, however, its norm cannot be evaluated exactly and so has to be approximated by the dual norm over a suitable 
finite dimensional test subspace $Y^\delta \subset Y$.
The resulting minimal residual solution from $X^\delta$ is a quasi-best approximation from $X^\delta$ when the pair $(X^\delta,Y^\delta)$ satisfies
a Ladyshenskaja--Babu\u{s}ka--Brezzi (LBB) condition. 
In view of the application with finite element subspaces w.r.t.~partitions of $Q$ that preferably are as general as possible, it is a drawback that so far the verification of this LBB condition has been restricted to pairs of finite element spaces w.r.t. partitions of $Q$ that permit a decomposition into time-slabs (\cite{249.99,249.992}).

\begin{remark} In \cite{249.991} uniform LBB-stability was demonstrated for trial and test spaces that are spanned by (adaptively generated) sets of tensor products of temporal wavelets and spatial finite element spaces.
\end{remark}

\subsection{FOSLS}\label{sec:FOSLS}
By writing the forcing function $f \in Y'$ (non-uniquely) as $f_1 + \divv_{{\bf x}} {\bf f}_2$ for some $f_1\in Y'$ and ${\bf f}_2\in L_2(Q)^d$, the heat equation for $u=u_1$ can equivalently be written as the \emph{first-order system}
\begin{align}\label{eq:first-order system}
 G{\bf u}:=  \left(\begin{array}{@{}c@{}} \partial_t u_1+\divv_{\bf x} {\bf u}_2  \\ -{\bf u}_2 - \nabla_{\bf x} u_1 \\  u_1(0,\cdot)\end{array} \right) 
  =  \left(\begin{array}{@{}c@{}}  f_1  \\ {\bf f}_2\\ u_0 \end{array} \right),\quad u_1|_{I \times \partial\Omega} = 0.
\end{align}
From the well-posedness of the space-time variational formulation of the second-order equation, and the boundedness of $\divv_{\bf x}\colon L_2(Q)^d \rightarrow Y'$ and $\nabla_{\bf x}\colon X \rightarrow L_2(Q)^d$, one can infer that $G$ is a boundedly invertible operator from
$$
X \times L_2(Q)^d\quad \text{to} \quad Y'  \times L_2(Q)^d \times L_2(\Omega)
$$
(see \cite[Lem.~2.3 \& Rem.~2.4]{243.85}).
Because of the presence of the dual space $Y'$, however, least-squares finite element discretizations of this first-order system suffer from the same restrictions imposed by an LBB condition.

As was first suggested in \cite[\S9.1.4]{23.5}, a solution for this problem is to restrict to functions ${\bf u}=(u_1,{\bf u}_2)$ for which $\divv {\bf u}:=\partial_t u_1+\divv_{\bf x} {\bf u}_2 \in L_2(Q)$.
Doing so, in \cite{75.257} it was proven that $G$ is a boundedly invertible operator from
$$
U := \{{\bf u}=(u_1,{\bf u}_2) \in X \times L_2(Q)^d\colon \divv {\bf u} \in L_2(Q)\},
$$
equipped with the  graph norm, to its range in
$$
L:=L_2(Q)\times L_2(Q)^d \times L_2(\Omega),
$$
i.e., $\|G {\bf u}\|_L \eqsim \|{\bf u}\|_U$ (${\bf u} \in L$)\footnote{In this work, by $C \lesssim D$ we will mean that $C\ge0$ can be bounded by a multiple of $D\ge0$, independently of parameters on which $C$ and $ D$ may depend. 
Obviously, $C \gtrsim D$ is defined as $C \lesssim D$, and $C\eqsim D$ as $C\lesssim D$ and $C \gtrsim D$.}. Consequently, for any ${\bf f}=(f_1,{\bf f}_2,u_0) \in \ran G$, and \emph{any} closed subspace $U^\delta \subset U$, finding ${\bf u}^\delta:=\argmin_{{\bf v} \in U^\delta}\|{\bf f}-G {\bf v}\|_L$ is a well-posed first order system least-squares (FOSLS) discretisation of $G {\bf u}={\bf f}$. Because the $L$-norm can be efficiently exactly evaluated, in \cite{23.5} such a least squares discretisation is referred to as being `practical'.
The resulting ${\bf u}^\delta$ is a quasi-best approximation from $U^\delta$ to the solution ${\bf u}$ in the norm on $U$,
i.e., $\|{\bf u}-{\bf u}^\delta\|_U \lesssim \inf_{{\bf v} \in U^\delta} \|{\bf u}-{\bf v}\|_U$. 

Adding to the results from \cite{75.257}, in \cite{75.28} the aforementioned range of $G$ was shown to be the full space $L$. Furthermore, it was shown that
$$
U := \{{\bf u}=(u_1,{\bf u}_2) \in L_2(I;H^1_0(\Omega)) \times L_2(Q)^d\colon \divv {\bf u} \in L_2(Q)\},
$$
equipped with its  graph norm, gives an equivalent definition of $U$. Finally, assuming that $f \in Y'$ is given as  $f=f_1 + \divv_{{\bf x}} {\bf f}_2$ for some $f_1\in L_2(Q)$ and ${\bf f}_2\in L_2(Q)^d$, which always exist by Riesz' representation theorem, it was shown that the first-order system formulation with spaces $U$ and $L$ applies whenever
the space-time variational formulation of second order from Section~\ref{sec:2ndorder} does. 
We also mention our recent generalization to the instationary Stokes problem with so-called slip boundary conditions~\cite{gs22b}.

A main advantage of the FOSLS approach is that it corresponds to a bilinear form that is symmetric and positive definite which is beneficial in several applications {(cf. \cite{fk22,gs22a}).
Being of least-squares type, the method comes with a built-in efficient and reliable a posteriori estimator for the error in any approximation, being the $L$-norm of its residual. 
The square of this norm naturally splits into contributions associated to the individual elements which can be used to drive an adaptive solution routine. Plain convergence of this routine has been demonstrated in \cite{75.28}, where in practice one observes that this adaptive routine selects a sequence of partitions that results in the best possible convergence rate.

\subsection{Low rates for non-smooth solutions}
For smooth solutions, this FOSLS works fully satisfactorily.
When applying (vectorial) continuous piecewise polynomial approximation of degree $k$ w.r.t.~a quasi-uniform partition of $Q$ into $(d+1)$-\emph{simplices} the error in $U$-norm is of the optimal order $ \text{DoF}^{-\frac{k}{d+1}}$.

Obviously for solutions that are insufficiently smooth, reduced rates are obtained. 
The experiments reported in \cite{75.257}, however, show that for several problems adaptivity hardly improves these rates, which remain disappointingly low.
For example, for the heat equation on $\Omega=(0,1)$,
with forcing function $f(=f_1)=2$ and initial condition $u_0=1$, with a quasi-uniform partition of $Q=I \times \Omega$ into triangles and $k=1$, in \cite{75.257} a rate equal to $0.08$ was reported. 
A low rate was to be expected due to the singularities in the solution at the corners $(0,0)$ and $(0,1)$ of $Q$
  as a consequence of the discontinuity between initial and boundary conditions. But even with adaptivity this rate improved to only $0.17$.
  The observed rate for $\Omega=(0,1)^2$,  $f=0$, and $u_0=1$ and a quasi-uniform partition of $Q$ into tetrahedra was $0.07$, which even did not improve at all by adaptive refinements.
  
  The disappointingly low rates for non-smooth solutions with the FOSLS method whilst applying adaptive refinements can be seen as a prize to be paid for the inclusion of the condition $\divv {\bf u} \in L_2(Q)$ in the definition of the space $U$. Indeed, 
with for example linear trial spaces for $u_1$ and ${\bf u}_2$, the (local) approximation error in ${\bf u}$ measured in $\|\divv\cdot\|_{L_2(Q)}$, being part of the graph norm on $U$, is of the order of the (local) mesh-size times the maximum of the (local) $H^2$ semi-norms of $u_1$ and ${\bf u}_2$. Realizing that $u_1=u$, and, for ${\bf f}_2=0$,  ${\bf u}_2=-\nabla_{\bf x} u_1$, we conclude that this (local) approximation error is of the order of the (local) mesh-size times  the maximum of the (local) $L_2$-norm of second- and \emph{third}- order derivatives of $u$. 
Similar bounds on the (local) approximation errors in $u_1$ and ${\bf u}_2$ in $L_2(I;H^1_0(\Omega))$- or $L_2(Q)^d$-norms involve at most \emph{second} order derivatives of $u$. Obviously, similar considerations apply to finite element spaces of higher order.

\subsection{Towards a remedy}
To cure this problem, an idea is to apply finite element spaces that are specially designed for the space $H(\divv;Q)$, as e.g.~the Raviart--Thomas finite elements.
Thanks to their `commuting diagram property', for, say, $\RT_0$-elements the (local) approximation error in ${\bf u}$ measured in $\|\divv\cdot\|_{L_2(Q)}$
 is of the order of the (local) mesh-size times the (local) $H^1$ semi-norm of $\divv {\bf u}$, the latter being equal to $f_1$. 
In many cases, the condition that $f_1$ has some smoothness is much milder than the corresponding condition on the smoothness of both terms $\partial_t u_1$ and $\divv_{\bf x} {\bf u}_2$ separately.
Unfortunately, because of the condition $u_1 \in L_2(I;H^1_0(\Omega))$ incorporated in the definition of $U$, $H(\divv;Q)$-conformity of the finite element space does not suffice, and therefore the Raviart--Thomas elements are not applicable.

In \cite{38.77} the space of vectorial continuous piecewise linears was enriched with bubbles associated to the faces of the simplices
such that the range of the divergence operator applied to this trial space equals the space of piecewise constants. 
This trial space, which is $H^1(Q)$- and thus $U$-conforming, 
comes with a quasi-interpolator (cf.~also \cite{35.927}) that satisfies a commuting diagram so that the (local) approximation error in ${\bf u}$ measured in $\|\divv\cdot\|_{L_2(Q)}$
is of the order of the (local) mesh-size times the local $H^1$ semi-norm of $\divv {\bf u}$.
Unfortunately, in our setting, where ${\bf u}=(u_1,{\bf u}_2)$ and ${\bf u}_2=-({\bf f}_2+\nabla_{\bf x} u_1)$, 
it turns out that with this space the problem of the appearance of higher order derivatives of $u$ in the local error bounds manifests itself at another place.
Due to the fact that not all faces of a $(d+1)$-simplex inside $Q$ are either parallel or perpendicular to the bottom $\{0\}\times\Omega$ of $Q$,
the local quasi-interpolation error in $u_1$ in the $L_2 \otimes H^1$-norm depends on derivatives of ${\bf u}_2$, and thus on derivatives of $u$ that are of one order higher than desirable.
Numerical experiments with the FOSLS method for this trial space do not show better rates than without the inclusion of these bubbles

\subsection{The approach from this work}
We consider finite element subspaces of $U$ w.r.t.~prismatic partitions of the time-space cylinder $Q$.
The space of local shape functions on each prism will be a Cartesian product of tensor products of finite element spaces in time and space.
In Section~\ref{sec:localfem}, we construct a local quasi-interpolant that satisfies the desired commuting diagram.

Defining the global quasi-interpolant piecewise as this local quasi-interpolant would correspond to a non-conforming finite element space, as the $u_1$-component would be discontinuous over element interfaces in the spatial direction and therefore not in $L_2(I;H^1_0(\Omega))$.
By constructing a global quasi-interpolant by averaging the local quasi-interpolants over these interfaces, the resulting global finite element space is conforming.
Although consequently a truly commuting diagram is lost,  in Section~\ref{sec:conforming}, for locally conforming prismatic partitions, we show a local upper bound for the global interpolation error that is satisfactory in the following sense: For the same convergence rate, it requires \emph{less} local smoothness of the solution than a corresponding interpolation error bound with simplicial finite element spaces.

Having the aim to allow for local (adaptive) refinements, appropriate for non-smooth solutions, \emph{nonconforming} prismatic partitions have to be considered.
In Section~\ref{sec:general prisms}, we demonstrate that also on local patches which are nonconforming, 
the local upper bound for the global interpolation error has the right order, albeit under stronger smoothness requirements on the solution.

The numerical experiments for an adaptive solver presented in Section~\ref{sec:numres} demonstrate the advantage of the use of prismatic elements for various singular solutions compared to the use of standard simplicial elements.

We also mention that prismatic partitions provide the possibility to use different (local) mesh-sizes in time and space.
For an ultra-weak DPG reformulation of the system \eqref{eq:first-order system} introduced in \cite{64.585}, a so-called parabolic scaling of the temporal and spatial mesh-sizes turns out to be beneficial for certain singular solutions, demonstrating the potential usefulness of anisotropic prisms.
However, while some theoretical results in this work apply to anisotropic prismatic meshes as well, the main results and the numerical experiments are restricted to isotropic prismatic partitions.

\section{The finite element} \label{sec:localfem}
As \emph{element domain} we consider a \emph{prism} $P=J \times K$, where $J$ is a (closed) interval and $K$ a (closed) $d$-simplex.
We will use the notations $h_J:=\diam J$ and $h_K:=\diam K$.

For $\ell, k \in \N_0$, we consider the \emph{space of shape functions} given by
$$
\cS_{\ell,k}(P):=\big(\cP_{\ell+1}(J) \otimes \cP_k(K)\big) \times \big(\cP_\ell(J) \otimes \RT_k(K)\big),
$$
with $\RT_k(K)$ being the Raviart--Thomas element of order $k$, see, e.g., \cite[III.3]{35.855}.
For $d=1$, this space should be read as $\cP_{k+1}(K)$.

The definition of the finite element is completed by the specification of the \emph{degrees of freedom} (DoFs) for both $\cP_{\ell+1}(J) \otimes \cP_k(K)$ and $\cP_\ell(J) \otimes \RT_k(K)$.
As DoFs for $\cP_{\ell+1}(J)$ we take
\be \label{dof1}
p(t) \quad(t \in \partial J)\quad \text{and, when } \ell \geq 1,\quad \int_J p q \,dt \quad(q \in \cP_{\ell-1}(J)),
\ee
and as DoFs for $\cP_{k}(K)$,
$$
\int_K p q \,d{\bf x}\quad(q \in \cP_k(K)).
$$
The DoFs for $\cP_{\ell+1}(J) \otimes \cP_k(K)$ are the tensor products of the DoFs for both factors.

As DoFs for $\cP_{\ell}(J)$ we take
\be \label{dof2a}
\int_J p q \,dt \quad(q \in \cP_{\ell}(J)),
\ee
and as DoFs for $\RT_k(K)$,
\be \label{dof2}
\int_{\partial K} {\bf p}\cdot {\bf n} \,q\,d{\bf s} \quad(q \in R_k(\partial K)) \quad \text{and, when } k \geq 1,\quad \int_K {\bf p}\cdot {\bf q} \,d{\bf x} \quad({\bf q} \in \cP_{k-1}(K)^d),
\ee
where $R_k(\partial K)=\{q \in L_2(\partial K)\colon q|_e \in P_k(e) \text{ for all facets } e \text { of } K\}$.
Again, the DoFs for the tensor product $\cP_\ell(J) \otimes \RT_k(K)$ are the tensor products of the DoFs for both factors.

\begin{remark}
Notice that the DoFs for $\cP_{\ell+1}(J)$ were chosen so that in a certain sense this space plays the role of a one-dimensional Raviart--Thomas space of order $\ell$.
A difference, however, is that, for $d>1$,  $\RT_k(K) \subsetneq \cP_{k+1}(K)^d$.
\end{remark}

\subsection{The local interpolant and a commuting diagram}
Above DoFs for $\cS_{\ell,k}(P)$ are well-defined on $\big(C(J) \otimes L_1(K)\big) \times \big(L_1(J) \otimes \big(L_s(K)^d \cap H(\divv;K)\big)\big)$ whenever $s>2$, see \cite[\S17.1-2]{70.97}.\footnote{For $d=1$, it suffices to take $s=2$, so that $L_2(K)^d \cap H(\divv;K)=H^1(K)$.}
On such a space they define a \emph{quasi-interpolator} $\cI_{\ell,k}^P=\cI_{\ell,k}^{P,1}\times \cI_{\ell,k}^{P,2}\colon {\bf v} \mapsto \cI_{\ell,k}^{P} {\bf v} \in \cS_{\ell,k}(P)$ by imposing that $\cI_{\ell,k}^{P} {\bf v}$ has the same DoFs as ${\bf v}$. By construction, this $\cI_{\ell,k}^P$ is thus a projector onto $\cS_{\ell,k}(P)$.

\begin{proposition}[Commuting diagram property] \label{prop:commuting} With $Q^P_{\ell,k}$ denoting the $L_2(P)$-orthogonal projector onto $\cP_\ell(J) \otimes \cP_k(K)$, it holds that 
$$
\divv \cI_{\ell,k}^P=Q^P_{\ell,k} \divv.
$$
\end{proposition}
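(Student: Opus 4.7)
The plan is to split the claim into two independent pieces using the product structure. Since $\divv(u_1,{\bf u}_2)=\partial_t u_1 + \divv_{\bf x}{\bf u}_2$ and $\cI_{\ell,k}^P = \cI_{\ell,k}^{P,1}\times \cI_{\ell,k}^{P,2}$, the identity we want reads
\begin{equation*}
\partial_t \cI_{\ell,k}^{P,1} u_1 + \divv_{\bf x} \cI_{\ell,k}^{P,2}{\bf u}_2
= Q_{\ell,k}^P\partial_t u_1 + Q_{\ell,k}^P \divv_{\bf x}{\bf u}_2,
\end{equation*}
so it suffices to prove the two term-wise identities separately.

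Both interpolators are tensor products of a temporal and a spatial factor, because each family of DoFs was chosen as a tensor product. More precisely, let $I_{\ell+1}^J\colon C(J)\to \cP_{\ell+1}(J)$ be the 1D nodal-plus-moment interpolator determined by \eqref{dof1}, let $Q_\ell^J$ be the $L_2(J)$-projector onto $\cP_\ell(J)$ (as determined by \eqref{dof2a}), let $Q_k^K$ be the $L_2(K)$-projector onto $\cP_k(K)$, and let $I_{\mathrm{RT},k}^K$ be the standard Raviart--Thomas interpolator on $K$ defined by \eqref{dof2}. Then
\begin{equation*}
\cI_{\ell,k}^{P,1} = I_{\ell+1}^J \otimes Q_k^K,\qquad \cI_{\ell,k}^{P,2} = Q_\ell^J \otimes I_{\mathrm{RT},k}^K,\qquad Q_{\ell,k}^P = Q_\ell^J\otimes Q_k^K.
\end{equation*}
Given this, the two remaining facts are (a) the 1D commuting diagram $\partial_t I_{\ell+1}^J = Q_\ell^J \partial_t$, and (b) the classical RT commuting diagram $\divv_{\bf x} I_{\mathrm{RT},k}^K = Q_k^K \divv_{\bf x}$, which may be cited from \cite{35.855}.

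To verify (a), which is the only calculation not directly in the literature in this exact form, I would fix $p$ in the domain of $I_{\ell+1}^J$ and note that $\partial_t I_{\ell+1}^J p \in \cP_\ell(J)$. For any test $q\in \cP_\ell(J)$, integration by parts gives
\begin{equation*}
\int_J \partial_t (I_{\ell+1}^J p)\,q\,dt = \big[(I_{\ell+1}^J p)\, q\big]_{\partial J} - \int_J (I_{\ell+1}^J p)\,\partial_t q\,dt.
\end{equation*}
The boundary term equals $[p\, q]_{\partial J}$ because $I_{\ell+1}^J p$ matches $p$ at the endpoints by \eqref{dof1}, and, since $\partial_t q\in \cP_{\ell-1}(J)$, the integral term equals $\int_J p\,\partial_t q\,dt$ by the moment conditions in \eqref{dof1} (the case $\ell=0$ being trivial since then $\partial_t q=0$). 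Integrating by parts back gives $\int_J \partial_t p\cdot q\,dt$, which identifies $\partial_t I_{\ell+1}^J p$ with $Q_\ell^J \partial_t p$.

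Combining, $\partial_t \cI_{\ell,k}^{P,1} = (\partial_t I_{\ell+1}^J)\otimes Q_k^K = (Q_\ell^J \partial_t)\otimes Q_k^K = Q_{\ell,k}^P\partial_t$, and analogously $\divv_{\bf x}\cI_{\ell,k}^{P,2} = Q_\ell^J\otimes(\divv_{\bf x} I_{\mathrm{RT},k}^K) = Q_\ell^J\otimes (Q_k^K\divv_{\bf x}) = Q_{\ell,k}^P\divv_{\bf x}$; summing the two yields the claim. I do not expect a real obstacle here: the proof is essentially a bookkeeping argument built on two well-known one-variable commuting diagrams, the mildest subtlety being to check that the domain hypotheses (the function class on which the DoFs are well defined) are preserved under the tensor-product factorization, which follows directly from the regularity assumptions stated just before the proposition.
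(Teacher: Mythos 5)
Your proof is correct, but it is organized differently from the paper's. The paper proves the identity in one stroke: after noting $\ran \divv \cI_{\ell,k}^P \subset \cP_\ell(J)\otimes\cP_k(K)$, it tests $\divv({\bf v}-\cI_{\ell,k}^P{\bf v})$ against every $p\otimes q$ with $p\in\cP_\ell(J)$, $q\in\cP_k(K)$ and integrates by parts simultaneously in $t$ and ${\bf x}$ over the prism; every resulting volume and boundary term is exactly one of the DoFs of ${\bf v}-\cI_{\ell,k}^P{\bf v}$, hence zero. You instead exploit the tensor factorization $\cI_{\ell,k}^{P,1}=\rho_\ell^J\otimes Q_k^K$, $\cI_{\ell,k}^{P,2}=Q_\ell^J\otimes\rho_k^K$, $Q_{\ell,k}^P=Q_\ell^J\otimes Q_k^K$ (the paper only states this factorization later, in the proof of Corollary 2.2) and reduce the claim to two one-variable commuting diagrams: the classical Raviart--Thomas one, cited from the literature, and the temporal one, which you verify by an integration by parts on $J$ that is exactly the paper's argument restricted to the time factor. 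What your route buys is modularity and reuse of known results; what the paper's direct computation buys is self-containedness and the fact that it never needs to split $\divv{\bf v}$ into $\partial_t v_1$ and $\divv_{\bf x}{\bf v}_2$ separately. That splitting is the one point you should make explicit: your term-wise identities presuppose that $Q_{\ell,k}^P\partial_t v_1$ and $Q_{\ell,k}^P\divv_{\bf x}{\bf v}_2$ are individually meaningful, which does hold in the stated domain (since ${\bf v}_2\in L_1(J;L_s(K)^d\cap H(\divv;K))$ gives $\divv_{\bf x}{\bf v}_2\in L_1(J;L_2(K))$, and then $\partial_t v_1=\divv{\bf v}-\divv_{\bf x}{\bf v}_2$ is integrable whenever $\divv{\bf v}$ is), but it is a hypothesis the paper's joint integration by parts quietly avoids; similarly, commuting $\partial_t$ with $Q_k^K$ and $\divv_{\bf x}$ with $Q_\ell^J$ deserves the one-line distributional justification you only allude to. With those remarks added, your argument is a complete and valid alternative proof.
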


\begin{proof} Clearly $\ran \divv \cI_{\ell,k}^P \subset \cP_\ell(J) \otimes \cP_k(K)$.
For $p \in \cP_\ell(J)$, $q \in  \cP_k(K)$ , and ${\bf v}=(v_1,{\bf v}_2)$, and with ${\bf n}=(n_t,{\bf n}_{\bf x})$ the outward pointing normal vector on $\partial P$, integration by parts gives
\begin{align*}
&\int_J \int_K \divv ({\bf v}- \cI_{\ell,k}^P{\bf v})(t,{\bf x}) p(t) q({\bf x})\,dt\,d{\bf x}=\\
&\quad-\int_J \int_K (v_1-\cI^{P,1}_{\ell,k} v_1) p'(t) q({\bf x})+ ({\bf v}_2-\cI^{P,2}_{\ell,k} {\bf v}_2)\cdot p(t)\nabla q({\bf x}) \,dt\,d{\bf x}+\\
&\quad
\int_K \int_{\partial J} (v_1-\cI^{P,1}_{\ell,k} v_1) n_t  p(s) q({\bf x})\,ds \,d{\bf x}+
\int_J \int_{\partial K} ({\bf v}_2-\cI^{P,2}_{\ell,k} {\bf v}_2) \cdot {\bf n}_{{\bf x}}  p(t) q({\bf y})\,d{\bf y} \,dt=0
\end{align*}
by the definition of the DoFs. 
This completes the proof.
\end{proof}

\begin{corollary} \label{corol1} For $k' \in \{0,\ldots,k\}$, $k'' \in \{0,\ldots,k+1\}$, $\ell' \in \{0,\ldots,\ell+1\}$, it holds that 

\noindent {\rm (i)} $\displaystyle \|\divv ({\bf v} -\cI_{\ell,k}^P{\bf v})\|_{L_2(P)}  \lesssim h_J^{\ell'} |\divv {\bf v}|_{H^{\ell'}(J)\otimes L_2(K)}
+ h_K^{k''} |\divv {\bf v}|_{L_2(J)\otimes H^{k''}(K)}$,

\noindent {\rm (ii)} $\displaystyle\|\nabla_{\bf x}(v_1 -\cI^{P,1}_{\ell,k}v_1)\|_{L_2(P)^d} \lesssim h_J^{\ell'+1} |v_1|_{H^{\ell'+1}(J)\otimes H^1(K)}
+ h_K^{{k'}} |v_1|_{L_2(J)\otimes H^{{k'}+1}(K)}$,

\noindent  {\rm (iii)} $\displaystyle \|{\bf v}_2 -\cI^{P,2}_{\ell,k}{\bf v}_2\|_{L_2(P)^d} \lesssim h_J^{\ell'} |{\bf v}_2|_{H^{\ell'}(J)\otimes L_2(K)^d}
+ h_K^{{k'}+1} |{\bf v}_2|_{L_2(K)\otimes H^{{k'}+1}(K)^d}$,

\noindent with hidden constants depending only on the shape regularity of $K$, the space dimension $d$, and the polynomial degrees $\ell$ and $k$.
\end{corollary}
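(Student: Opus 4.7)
My plan is to derive all three bounds in parallel by identifying each local (quasi-)interpolant as a tensor product of standard one-dimensional and single-simplex projectors, and then invoking (anisotropic) Bramble--Hilbert estimates transferred from a reference prism $\hat{P} = \hat{J}\times\hat{K}$ via an affine scaling in each factor.

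Part~(i) should follow almost immediately from Proposition~\ref{prop:commuting}: the commuting diagram yields
\[
 \divv({\bf v} - \cI^P_{\ell,k}{\bf v}) = (I - Q^P_{\ell,k})\divv{\bf v},
\]
and $Q^P_{\ell,k} = Q^J_\ell \otimes Q^K_k$, where $Q^J_\ell$ and $Q^K_k$ are the $L_2$-orthogonal projections onto $\cP_\ell(J)$ and $\cP_k(K)$, respectively. Using the identity $I - Q^J_\ell\otimes Q^K_k = (I-Q^J_\ell)\otimes I + Q^J_\ell\otimes(I-Q^K_k)$, the $L_2$-stability of both projections, and the classical Bramble--Hilbert estimates on the reference interval and the reference simplex (with shape-regularity of $K$ absorbing the scaling constants in the spatial direction), I would obtain~(i) for all admissible $\ell'$ and $k''$.

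For parts~(ii) and~(iii) I would first recognise the tensor-product structure of the interpolants. The DoFs on $\cP_k(K)$ coincide with those of the $L_2$-projector $Q^K_k$; the DoFs~\eqref{dof2a} with those of $Q^J_\ell$; the DoFs~\eqref{dof1} define a one-dimensional quasi-interpolator $\pi^J_{\ell+1}$ onto $\cP_{\ell+1}(J)$; and the DoFs~\eqref{dof2} are the classical Raviart--Thomas DoFs, so they define the standard Raviart--Thomas interpolant $\pi^K_{\RT,k}$ onto $\RT_k(K)$. Hence
\[
\cI^{P,1}_{\ell,k} = \pi^J_{\ell+1}\otimes Q^K_k, \qquad \cI^{P,2}_{\ell,k} = Q^J_\ell\otimes \pi^K_{\RT,k}.
\]
I would then split, for~(ii),
\[
 v_1 - \cI^{P,1}_{\ell,k}v_1 = \bigl((I - \pi^J_{\ell+1})\otimes I\bigr)v_1 + \bigl(\pi^J_{\ell+1}\otimes(I - Q^K_k)\bigr)v_1,
\]
apply $\nabla_{\bf x}$ (which commutes with the temporal operators), bound the first summand by the one-dimensional Bramble--Hilbert estimate for $\pi^J_{\ell+1}$ applied slice-wise in ${\bf x}$, and the second by the standard $L_2$-projection gradient estimate $\|\nabla_{\bf x}(I-Q^K_k)w\|_{L_2(K)^d}\lesssim h_K^{k'}|w|_{H^{k'+1}(K)}$ combined with reference-element boundedness of $\pi^J_{\ell+1}$. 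For~(iii) the analogous splitting ${\bf v}_2 - \cI^{P,2}_{\ell,k}{\bf v}_2 = \bigl((I-Q^J_\ell)\otimes I\bigr){\bf v}_2 + \bigl(Q^J_\ell\otimes(I-\pi^K_{\RT,k})\bigr){\bf v}_2$, together with the classical Raviart--Thomas estimate $\|(I-\pi^K_{\RT,k}){\bf w}\|_{L_2(K)^d}\lesssim h_K^{k'+1}|{\bf w}|_{H^{k'+1}(K)^d}$ (cf.~\cite[III.3]{35.855}) and $L_2$-stability of $Q^J_\ell$, yields the claim.

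The main technical obstacle will be the stability of $\pi^J_{\ell+1}$ and $\pi^K_{\RT,k}$: both use DoFs that are not $L_2$-continuous (endpoint values and normal edge traces), so one cannot invoke direct $L_2$-stability. I would argue on the reference prism, where both projectors are bounded on appropriate Sobolev spaces ($H^1(\hat{J})$ and $H^s(\hat{K})^d$ with $s>\tfrac{1}{2}$), and rely on shape-regularity of $K$ to ensure that after the anisotropic affine scaling only the stated powers of $h_J$ and $h_K$ remain, with constants depending solely on shape regularity, $d$, $\ell$, and $k$.
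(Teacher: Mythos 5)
Your treatment of (i) and (iii) is essentially the paper's argument: the paper likewise identifies $\cI^{P,1}_{\ell,k}=\rho^J_\ell\otimes Q^K_k$ and $\cI^{P,2}_{\ell,k}=Q^J_\ell\otimes\rho^K_k$ (your $\pi^J_{\ell+1}$ and $\pi^K_{\RT,k}$ are its $\rho^J_\ell$ and $\rho^K_k$), proves (i) via Proposition~\ref{prop:commuting} and the splitting of $\identity\otimes\identity-Q^J_\ell\otimes Q^K_k$ with $L_2$-stability of the orthogonal projectors, and proves (iii) with exactly your splitting $(\identity-Q^J_\ell)\otimes\identity+Q^J_\ell\otimes(\identity-\rho^K_k)$ together with the Raviart--Thomas estimate \eqref{1}.

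The problem is (ii). Your splitting $v_1-\cI^{P,1}_{\ell,k}v_1=\bigl((\identity-\rho^J_\ell)\otimes\identity\bigr)v_1+\bigl(\rho^J_\ell\otimes(\identity-Q^K_k)\bigr)v_1$ places the temporal quasi-interpolant $\rho^J_\ell$ itself (not a difference $\identity-\rho^J_\ell$) in front of the factor for which the claimed bound allows only $L_2$-regularity in time: the $h_K^{k'}$ term of (ii) is $|v_1|_{L_2(J)\otimes H^{k'+1}(K)}$. Since the DoFs of $\rho^J_\ell$ include endpoint evaluations, $\rho^J_\ell$ is not $L_2(J)$-stable, and your proposed remedy --- boundedness on $H^1(\hat J)$ on the reference element --- yields after scaling the stability $\|\rho^J_\ell w\|_{L_2(J)}\lesssim\|w\|_{L_2(J)}+h_J\|w'\|_{L_2(J)}$, hence an additional term of the form $h_K^{k'}h_J\,|v_1|_{H^1(J)\otimes H^{k'+1}(K)}$, i.e.\ a mixed derivative of total order $k'+2$. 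So as written you prove a weaker statement than (ii); and since the entire point of the construction is to minimize such mixed-smoothness requirements (cf.\ Remark~\ref{rem:ell and k}), the exact form matters. The paper avoids this with the asymmetric identity $\identity\otimes\identity-\rho^J_\ell\otimes Q^K_k=(\identity-\rho^J_\ell)\otimes Q^K_k+\identity\otimes(\identity-Q^K_k)$: the non-$L_2$-stable operator appears only inside the difference $(\identity-\rho^J_\ell)$, acting in the term where $H^{\ell'+1}(J)$-smoothness is assumed anyway, the spatial factor there is controlled by the single-element $H^1$-stability $\|\nabla_{\bf x}Q^K_k w\|_{L_2(K)^d}\lesssim|w|_{H^1(K)}$, and the second term contains no temporal operator at all. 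Replacing your splitting in (ii) by this one --- the same trick you already use in (iii) --- closes the gap.
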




\begin{proof} Let $\rho_\ell^{J}$, $\rho_{k}^{K}$ denote the projectors on $\cP_{\ell+1}(J)$ and $\RT_k(K)$ defined by the DoFs from \eqref{dof1} and \eqref{dof2}, respectively, and
 let
$Q_{\ell}^{J}$ and $Q_{k}^{K}$ denote the $L_2$-orthogonal projectors onto $\cP_{\ell}(J)$ and $\cP_{k}(K)$, respectively, so that
$$
\cI^{P,1}_{\ell,k}=\rho_\ell^{J}\otimes Q_k^{K},\quad \cI^{P,2}_{\ell,k}=Q_{\ell}^{J}\otimes \rho_{k}^{K}.
$$
It is known, see, e.g.,~\cite[Thm.~16.4]{70.97}, that for $s \in [1,\infty]$,
\be \label{1}
\|(\identity-\rho_k^K) {\bf w}\|_{L_s(K)^d} \lesssim h_K^{k'+1} |{\bf w}|_{W^{k'+1}_s(K)^d} \quad ({\bf w} \in W^{k'+1}_s(K)^d). \footnotemark
\ee
\footnotetext{The case $s \neq 2$ will be relevant later.}
Similarly, we have $\|(\identity-\rho_\ell^{J}) w\|_{L_2(J)}  \lesssim h_J^{\ell'+1} |w|_{H^{\ell'+1}(J)}$ $(w \in H^{\ell'+1}(J))$.

Thanks to Proposition~\ref{prop:commuting}, by writing
\begin{align*}
\identity\otimes \identity-Q_{\ell}^{J}\otimes Q_k^{K}&=(\identity-Q_{\ell}^{J})\otimes \identity +Q_k^{K}\otimes (\identity-Q_k^{K}),\\
\identity\otimes \identity-\rho_\ell^{J}\otimes Q_k^{K}&=(\identity-\rho_\ell^{J})\otimes Q_k^{K}+\identity \otimes (\identity-Q_k^{K}),\\
\identity\otimes \identity-Q_{\ell}^{J}\otimes \rho_{k}^{K}&=(\identity-Q_{\ell}^{J})\otimes \identity+Q_{\ell}^{J} \otimes (\identity-\rho_{k}^{K}),
\end{align*}
standard estimates for the orthogonal projectors give the results.
\end{proof}

\begin{remark} For simplicity Corollary~\ref{corol1} is restricted to $L_2$-based Sobolev norms with integer smoothness indices. Error bounds in terms of general $L_p$-based Sobolev-norms, possibly with fractional smoothness indices can be derived as well.
\end{remark}

In the \emph{isotropic} case of $h_J \eqsim h_K$, the best choice for the polynomial degrees $\ell$ and $k$ is $\ell+1=k$. Indeed, increasing either $\ell$ or $k$ will not improve 
the estimate of minimal order among the upper bounds of Corollary~\ref{corol1}. By taking $\ell'=k=k''$, $\ell'+1=k=k'$, and $\ell'=k=k'+1$ in the bounds (i), (ii), and (iii), respectively, which minimizes the regularity conditions without affecting the minimal order, one arrives at the following upper bound for the interpolation error in the local $U$-norm:

\begin{corollary} With 
\be\label{eq:local Unorm}
\|{\bf v}\|_{U,P}:=\sqrt{\|\nabla_{\bf x}v_1\|^2_{L_2(P)^d}+\|{\bf v}_2\|_{L_2(P)^d}^2+\|\divv {\bf v}\|_{L_2(P)}^2},
\ee 
for $h_P:=h_J \eqsim h_K$ and $k \in \N$, it holds that
$$
\|{\bf v} -\cI_{k-1,k}^P {\bf v}\|_{U,P} \lesssim h_P^k \big(|\nabla_{\bf x} v_1|_{H^k(P)^d}+|{\bf v}_2|_{H^k(P)^d}+
|\divv {\bf v}|_{H^k(P)}\big).
$$

In particular, for ${\bf u}=(u_1,{\bf u}_2)$ being the solution of \eqref{eq:first-order system}, i.e., $u_1=u$ being the solution of the heat equation, it holds that
\be \label{9}
\|{\bf u} -\cI_{k-1,k}^P {\bf u}\|_{U,P} \lesssim h_P^k \big(|\nabla_{\bf x} u|_{H^k(P)^d}+|{\bf f}_2|_{H^k(P)^d}+
|f_1|_{H^k(P)}\big).
\ee
\end{corollary}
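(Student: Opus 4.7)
The plan is to apply Corollary~\ref{corol1} with $\ell=k-1$ and the specific parameter choices sketched in the paragraph just before the statement, and then to rewrite the resulting tensor-product Sobolev semi-norms as ordinary semi-norms on $P$. Concretely, I would take $\ell'=k, k''=k$ in bound (i); $\ell'=k-1, k'=k$ in bound (ii); and $\ell'=k, k'=k-1$ in bound (iii). Using $h_J \eqsim h_K \eqsim h_P$, every resulting prefactor collapses to $h_P^k$.

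The next step is to dominate the anisotropic tensor-product semi-norms by the isotropic ones on $P$. For (i) and (iii) this is immediate: both $|\divv{\bf v}|_{H^k(J)\otimes L_2(K)}+|\divv{\bf v}|_{L_2(J)\otimes H^k(K)}$ and $|{\bf v}_2|_{H^k(J)\otimes L_2(K)^d}+|{\bf v}_2|_{L_2(J)\otimes H^k(K)^d}$ are controlled by $|\divv{\bf v}|_{H^k(P)}$ and $|{\bf v}_2|_{H^k(P)^d}$, respectively. For (ii), the right-hand side contains $|v_1|_{H^k(J)\otimes H^1(K)}$ and $|v_1|_{L_2(J)\otimes H^{k+1}(K)}$; the essential observation is that both of these involve at least one spatial derivative of $v_1$ and are therefore dominated by $|\nabla_{\bf x} v_1|_{H^k(P)^d}$, which is what allows the final bound to be expressed purely in terms of quantities appearing in the local $U$-norm. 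Summing the squares of the three estimates and taking the square root gives the first inequality.

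For the second inequality I would specialise to the solution of \eqref{eq:first-order system}. Setting $v_1=u_1=u$ gives $|\nabla_{\bf x} v_1|_{H^k(P)^d}=|\nabla_{\bf x} u|_{H^k(P)^d}$. From the first-order system one has $\divv {\bf u}=\partial_t u+\divv_{\bf x}{\bf u}_2=f_1$, so $|\divv{\bf u}|_{H^k(P)}=|f_1|_{H^k(P)}$; likewise ${\bf u}_2=-\nabla_{\bf x} u-{\bf f}_2$ yields $|{\bf u}_2|_{H^k(P)^d}\lesssim |\nabla_{\bf x} u|_{H^k(P)^d}+|{\bf f}_2|_{H^k(P)^d}$. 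Plugging these into the general estimate produces \eqref{9}.

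The argument is essentially bookkeeping, so I do not expect a real technical obstacle—the only point that deserves a second look is the handling of (ii), since it is there that a naive estimate would reintroduce one spatial derivative too many on $v_1$ and spoil the regularity count. Conceptually, the payoff of the proof is precisely that \eqref{9} involves derivatives of $\nabla_{\bf x} u$, ${\bf f}_2$ and $f_1$ only up to order $k$, with no appearance of $(k{+}1)$-st order derivatives of $u$ itself; this is the improvement over the simplicial setting highlighted in the introduction.
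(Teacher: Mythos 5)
Your proposal is correct and follows exactly the paper's route: the corollary is obtained from Corollary~\ref{corol1} with $\ell=k-1$ and the parameter choices $\ell'=k=k''$, $\ell'+1=k=k'$, $\ell'=k=k'+1$ in (i), (ii), (iii), using $h_J\eqsim h_K$ to collapse all prefactors to $h_P^k$ and absorbing the mixed semi-norms (in particular both terms of (ii), each carrying at least one spatial derivative) into $|\nabla_{\bf x}v_1|_{H^k(P)^d}$, $|{\bf v}_2|_{H^k(P)^d}$, $|\divv{\bf v}|_{H^k(P)}$. The specialization via $\divv{\bf u}=f_1$ and ${\bf u}_2=-\nabla_{\bf x}u-{\bf f}_2$ is likewise the paper's argument for \eqref{9}.
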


\begin{remark}
For comparison, 
to obtain a local error bound of order $k$ with a trial space on a $(d+1)$-simplex $T$, one needs a trial space that contains $\cP_k(T)^{d+1}$.
Then, with $h_T:=|T|^{\frac{1}{d+1}}$, the bound on the local interpolation error in the $\|\cdot\|_{U,P}$-norm 
reads as $h_T^k |{\bf v}|_{H^{k+1}(T)^{d+1}}$, and so for ${\bf v}={\bf u}$, 
as 
$$
h_T^k (|u|_{H^{k+1}(T)}+|\nabla_{\bf x}u|_{H^{k+1}(T)^d}+|{\bf f}_2|_{H^{k+1}(T)^d})
$$ 
which requires more smoothness of $u$ than the bound \eqref{9}.
As was already mentioned in the introduction, for these simplicial elements the addition of bubbles in order to realize a commuting diagram for the divergence operator does not give rise to better results.
\end{remark}

\section{Global finite element space}\label{sec:global space}
\subsection{Conforming partition into prisms}\label{sec:conforming}
Let $\cP$ be an (essentially) disjoint partition of the time-space cylinder $\overline{Q}$ into prisms $P=J \times K$ for closed intervals $J \subset I$ and uniformly shape-regular closed $d$-simplices $K \subset \Omega$.
In this subsection we assume that $\cP$ is \emph{conforming} in the sense that if the intersection of two different prisms from $\cP$ has a positive $d$-dimensional measure, then these prisms share a common facet.
This means that $\cP$ has to be a Cartesian product of an essentially disjoint partition $\cJ$ of $\overline{I}$ into subintervals $J$ and an essentially disjoint conforming partition $\cK$ of $\overline{\Omega}$ into closed $d$-simplices $K$.
For $K \in \cK$, we set \mbox{$\omega_K=\omega_K^\cK:=\cup\{K'\!\! \in \!\cK\colon K' \cap K \neq \emptyset\}$.}

Given $\ell \in \N_0$, $k \in \N$
, we define the global finite element space
$$
\cS_{\ell,k}(\cP)=\{{\bf v}\in U\colon {\bf v}|_P \in \cS_{\ell,k}(P) \,(P \in \cP)\}.
$$
Our aim is to construct a suitable interpolation operator into $\cS_{\ell,k}(\cP)$. For some $s>2$ ($s \geq 2$ when $d=1$), let ${\bf v} \in C(\overline{I};L_1(\Omega)) \times L_1\big(I;L_s(\Omega)^d \cap H(\divv;\Omega)\big)$.
The piecewise interpolation operator 
$$
\cI_{\ell,k}^{\cP}=\cI_{\ell,k}^{\cP,1}\times \cI_{\ell,k}^{\cP,2}:= {\bf v}\mapsto (\cI_{\ell,k}^{P,1} {v_1}|_{P},\cI_{\ell,k}^{P,2} {{\bf v}_2}|_{P})_{P \in \cP}
$$
is, although it maps into $H(\divv;Q)$, not appropriate since $\ran \cI_{\ell,k}^{\cP,1} \not\subset L_2(I;H^1_0(\Omega))$.
Recalling that $\cI_{\ell,k}^{P,1}=\rho_\ell^J \otimes Q_k^K$, 
 we therefore replace $\cI_{\ell,k}^{\cP}$ by
\be \label{7}
 \breve{\cI}_{\ell,k}^{\cP}= \breve{\cI}_{\ell,k}^{\cP,1} \times  \cI_{\ell,k}^{\cP,2}
\ee
with $\breve{\cI}_{\ell,k}^{\cP,1}$ being defined by the replacement of the piecewise orthogonal projector $w \mapsto (Q_k^K w|_K)_K$ by the operator $\breve{Q}_k$ defined as this piecewise orthogonal projector followed by 
averaging at the nodal points\footnote{The set of nodal points is defined as the union over $K$ in the partition of $\Omega$ of the principal lattice of order $k$ on $K$.} shared by multiple $d$-simplices $K$ in the partition of $\Omega$, or setting the value to zero at nodal points on $\partial\Omega$.
Since a piecewise polynomial on $\Omega$ is in $H^1_0(\Omega)$ if (and only if) it is continuous and vanishes at $\partial\Omega$, the mapping $\breve{\cI}_{\ell,k}^{\cP}$ is a projector onto $\cS_{\ell,k}(\cP)$.

It holds that for $K \in \cK$, $\|\breve{Q}_k w\|_{L_2(K)} \lesssim \|w\|_{L_2(\omega_K)}$, and for $n \in \{0,1\}$, $m \in \{1,\ldots,k+1\}$, and $w$ that vanishes at $\partial\Omega$,
\be \label{20}
|({\rm Id}-\breve{Q}_k)w|_{H^n(K)} \lesssim h_K^{m-n} |w|_{H^{m}(\omega_K)}.
\ee
The latter inequality follows by the usual homogeneity and polynomial reproduction arguments after realizing that, when applied to $w$ with $w|_{\partial\Omega}=0$, the definition of $\breve{Q}_k$ does not change when the zero DOFs associated to nodal points on $\partial\Omega$ are replaced by the usual Scott--Zhang functionals in terms of a weighted integrals of $w|_{\partial\Omega}$ (see \cite[Appendix]{75.527}). 
A comprehensive analysis of quasi-interpolators constructed by averaging can be found in \cite{70.965}.

 \begin{proposition} \label{prop1} For any $k' \in \{0,\ldots,k\}$, $k'' \in \{0,\ldots,k+1\}$, $\ell' \in \{0,\ldots,\ell+1\}$, and $P \in \cP$ it holds that 
\begin{align*}
\|\divv ({\bf v} -\breve{\cI}_{\ell,k}^{\cP} {\bf v})\|_{L_2(P)}  &\lesssim  h_J^{\ell'} |\divv {\bf v}|_{{H^{\ell'}(J)\otimes L_2(K})}\\
&\quad + h_K^{k''}  \big( |\divv {\bf v}|_{L_2(J)\otimes H^{k''}(K)}+  | v_1|_{H^1(J)\otimes H^{k''}(\omega_K)}\big),\\
\|\nabla_{\bf x} (v_1 -\breve{\cI}_{\ell,k}^{\cP,1} v_1)\|_{L_2(P)^d} & \lesssim  h_J^{\ell'+1} |v_1|_{H^{\ell'+1}(J)\otimes H^1(\omega_K)}
+ h_K^{{k'}} | v_1|_{L_2(J)\otimes H^{{k'}+1}(\omega_K)},\\
\|{\bf v}_2 -\cI_{\ell,k}^{\cP,2} {\bf v}_2\|_{L_2(P)^d} & \lesssim  h_J^{\ell'} |{\bf v}_2|_{H^{\ell'}(J)\otimes L_2(K)^d}
+ h_K^{{k'}+1} |{\bf v}_2|_{L_2(J)\otimes H^{{k'}+1}(K)^d},
\end{align*}
for the first and second estimate assuming that $v_1$ vanishes at $I \times \partial\Omega$.
The hidden constants depend only on the shape regularity of $\cK$, the space dimension $d$, and the polynomial degrees $\ell$ and $k$.
\end{proposition}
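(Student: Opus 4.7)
The plan is to reduce everything to the purely piecewise interpolant $\cI^P_{\ell,k}$ from Section~\ref{sec:localfem}, whose error is controlled by Corollary~\ref{corol1}, and to handle the averaging correction separately. Since $\breve{\cI}^{\cP}_{\ell,k}$ and $\cI^{P}_{\ell,k}$ agree on the second component and differ in the first only by replacing $Q_k^K$ with $\breve{Q}_k$, the correction on a prism $P=J\times K$ is
\[
(\cI_{\ell,k}^{P}{\bf v} - \breve{\cI}_{\ell,k}^{\cP}{\bf v})|_P = \big((\rho_\ell^J \otimes (Q_k^K - \breve{Q}_k))v_1,\ 0\big).
\]

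The third estimate is immediate from Corollary~\ref{corol1}(iii), because $\cI_{\ell,k}^{\cP,2}{\bf v}_2|_P = \cI_{\ell,k}^{P,2}{\bf v}_2$. For the second estimate I would apply $\nabla_{\bf x}$ to the splitting
\[
\identity\otimes \identity - \rho_\ell^J \otimes \breve{Q}_k = \identity \otimes (\identity - \breve{Q}_k) + (\identity - \rho_\ell^J) \otimes \breve{Q}_k
\]
and bound the two resulting terms. The first yields the $h_K^{k'}|v_1|_{L_2(J)\otimes H^{k'+1}(\omega_K)}$ contribution directly from \eqref{20} with $n = 1$, $m = k'+1$. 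The second is handled fiberwise in $x$ using the one-dimensional bound $\|(\identity - \rho_\ell^J)w\|_{L_2(J)} \lesssim h_J^{\ell'+1}|w|_{H^{\ell'+1}(J)}$, after commuting $\partial_t^{\ell'+1}$ with $\breve{Q}_k$ (they act on different variables) and using the $H^1$-stability $|\breve{Q}_k w|_{H^1(K)} \lesssim |w|_{H^1(\omega_K)}$ for $w$ vanishing on $\partial\Omega$, which is a triangle-inequality consequence of \eqref{20} with $m = n = 1$. This produces the $h_J^{\ell'+1}|v_1|_{H^{\ell'+1}(J)\otimes H^1(\omega_K)}$ term.

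For the first estimate I would split, on each $P$,
\[
\divv({\bf v} - \breve{\cI}_{\ell,k}^{\cP}{\bf v}) = \divv({\bf v} - \cI_{\ell,k}^{P}{\bf v}) + \partial_t\big[(\rho_\ell^J \otimes (Q_k^K - \breve{Q}_k))v_1\big],
\]
where the first summand is bounded by Corollary~\ref{corol1}(i). The crux for the second summand is the one-dimensional commuting identity $\partial_t\rho_\ell^J = Q_\ell^J \partial_t$---already implicit in the proof of Proposition~\ref{prop:commuting}---which follows from integration by parts: for $q\in\cP_\ell(J)$,
\[
\int_J(\partial_t\rho_\ell^J w - \partial_t w)\,q\,dt = -\int_J(\rho_\ell^J w - w)\,q'\,dt + \big[(\rho_\ell^J w - w)q\big]_{\partial J} = 0,
\]
since $\rho_\ell^J w - w$ vanishes at $\partial J$ and is $L_2(J)$-orthogonal to $\cP_{\ell-1}(J) \ni q'$. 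Consequently the second summand equals $(Q_\ell^J \otimes (Q_k^K - \breve{Q}_k))\partial_t v_1$; its $L_2(P)$-norm is bounded by $\|(Q_k^K - \breve{Q}_k)\partial_t v_1\|_{L_2(P)}$ via $L_2$-stability of $Q_\ell^J$, and then by $h_K^{k''}|v_1|_{H^1(J)\otimes H^{k''}(\omega_K)}$ via a triangle inequality, the classical $L_2$-estimate for $Q_k^K$, and \eqref{20} with $n = 0$, $m = k''$ applied to $\partial_t v_1(t,\cdot)$, which inherits the boundary vanishing from $v_1$.

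The main obstacle is exactly this interchange of $\partial_t$ with $\rho_\ell^J$: a naive inverse-inequality approach would cost a factor $h_J^{-1}$, and a bare $H^1$-stability argument would require extra temporal smoothness of $v_1$. The identity $\partial_t\rho_\ell^J = Q_\ell^J \partial_t$ is exactly what the choice of degrees of freedom \eqref{dof1}---endpoint interpolation together with $\cP_{\ell-1}$-moments---is engineered to deliver, and it accounts for the clean appearance of $|v_1|_{H^1(J)\otimes H^{k''}(\omega_K)}$ in the first bound.
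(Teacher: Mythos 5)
Your proposal is correct and follows essentially the same route as the paper's proof: the same identification of the correction term $(\rho_\ell^J\otimes(Q_k^K-\breve{Q}_k))v_1$, the same tensor-product splittings, the commuting identity $\partial_t\rho_\ell^J=Q_\ell^J\partial_t$ (stated in the paper as $\|(\rho_\ell^J w)'\|_{L_2(J)}=\|Q_\ell^J w'\|_{L_2(J)}$), and the bounds \eqref{20} together with Corollary~\ref{corol1}. The only difference is that you spell out the integration-by-parts proof of the commuting identity, which the paper merely asserts.
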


\begin{proof} The third estimate follows directly from Corollary~\ref{corol1}{\rm (iii)}.

Writing $\identity\otimes \identity-\rho_\ell^{J}\otimes \breve{Q}_k=(\identity-\rho_\ell^{J})\otimes \breve{Q}_k+\identity \otimes (\identity-\breve{Q}_k)$,
the second estimate follows from the estimate for $\identity-\rho_\ell^{J}$ that was already used in the proof of Corollary~\ref{corol1}, and, for $w$ that vanishes on $\partial\Omega$, $|\breve{Q}_k w|_{H^1(K)} \lesssim |w|_{H^1(\omega_K)}$, and $|(\identity-\breve{Q}_k)w|_{H^1(K)} \lesssim h_K^{{k'}} |w|_{H^{{k'}+1}(\omega_K)}$.

Writing $(\divv \breve{\cI}_{\ell,k}^{\cP}{\bf v})|_P=\divv \cI_{\ell,k}^{P}{\bf v}|_P+(\partial_t \rho_\ell^J \otimes (\breve{Q}_k-Q_k^K)v_1)|_P$, 
the first estimate follows from Corollary~\ref{corol1}(i), $\|(\rho_\ell^J w)'\|_{L_2(J)}=\|Q_\ell^J w'\|_{L_2(J)} \lesssim \|w'\|_{L_2(J)}$, and, for $w$ that vanishes on $\partial\Omega$, $\|(\breve{Q}_k-Q_\ell^K)w\|_{L_2(K)} \lesssim h_K^{k''} |w|_{H^{k''}(\omega_K)}$.
\end{proof}

By taking $\ell+1=k$, and $\ell'=k=k''$, $\ell'+1=k=k'$, and $\ell'=k=k'+1$ in the first, second, and third bound of Proposition~\ref{prop1}, respectively, we arrive at the following upper bound for the interpolation error in the local $U$-norm~\eqref{eq:local Unorm} in the \emph{isotropic case}:

\begin{corollary} \label{corol2} For $h_P:=h_J \eqsim h_K$ and $k \in \N$, it holds that
\be \label{6}
\|{\bf v} -\breve{\cI}_{k-1,k}^{\cP} {\bf v}\|_{U,P} \lesssim h_P^k \big(|\nabla_{\bf x} v_1|_{H^k(J \times \omega_K)^d}+|{\bf v}_2|_{H^k(P)^d}+
|\divv {\bf v}|_{H^k(P)}\big)
\ee
assuming that $v_1$ vanishes at $I \times \partial\Omega$\mbox{ }\footnote{and is of course such that the norms at the right hand side are bounded. Silently such an assumption has been made at similar instances.}.

In particular, for ${\bf u}$ being the solution of \eqref{eq:first-order system}, it holds that
$$
\|{\bf u} -\breve{\cI}_{k-1,k}^{\cP} {\bf u}\|_{U,P} \lesssim h_P^k \big(|\nabla_{\bf x} u|_{H^k(J \times \omega_K)^d}+|{\bf f}_2|_{H^k(P)^d}+
|f_1|_{H^k(P)}\big).
$$
\end{corollary}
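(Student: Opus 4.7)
The plan is to specialize Proposition~\ref{prop1} to the isotropic setting with $\ell = k-1$ and optimize the parameters $\ell', k', k''$ so that the orders of the three bounds all agree at $h_P^k$. Unpacking the local $U$-norm \eqref{eq:local Unorm} reduces everything to controlling the three terms $\|\divv({\bf v} - \breve{\cI}_{k-1,k}^{\cP} {\bf v})\|_{L_2(P)}$, $\|\nabla_{\bf x}(v_1 - \breve{\cI}_{k-1,k}^{\cP,1} v_1)\|_{L_2(P)^d}$, and $\|{\bf v}_2 - \cI_{k-1,k}^{\cP,2} {\bf v}_2\|_{L_2(P)^d}$ separately, each with bounds already furnished by Proposition~\ref{prop1}.

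The concrete choices I would make are: in the first bound, take $\ell' = k$ and $k'' = k$ (both saturating their allowed ranges, since $\ell+1 = k$ and $k'' \leq k+1$); in the second bound, take $\ell' = k-1$ (so $\ell'+1 = k$) and $k' = k$; in the third bound, take $\ell' = k$ and $k' = k-1$ (so $k'+1 = k$). With $h_J \eqsim h_K \eqsim h_P$, every term on the right-hand side then carries the uniform factor $h_P^k$.

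The remaining step is to convert the resulting tensor-product Sobolev semi-norms into the three cleaner expressions in \eqref{6}. For the $\divv$- and $\nabla_{\bf x}$-bounds involving $v_1$, the terms $|v_1|_{H^1(J)\otimes H^k(\omega_K)}$, $|v_1|_{H^k(J)\otimes H^1(\omega_K)}$, and $|v_1|_{L_2(J)\otimes H^{k+1}(\omega_K)}$ all appear; in each case we factor out one spatial derivative to recognize these as mixed derivatives of a component of $\nabla_{\bf x} v_1$ of total order $k$, so that all three are dominated by $|\nabla_{\bf x} v_1|_{H^k(J\times\omega_K)^d}$ (this is where $k \in \N$ is used). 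The tensor-product norms of $\divv {\bf v}$ and ${\bf v}_2$ are, analogously, dominated by the full $H^k(P)$-semi-norms, yielding \eqref{6}. The boundary condition $v_1|_{I\times\partial\Omega}=0$ is inherited from the hypothesis of Proposition~\ref{prop1} and is needed to invoke \eqref{20} for $\breve{Q}_k$.

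The second assertion follows by setting ${\bf v} = {\bf u}$ and invoking the first-order system \eqref{eq:first-order system}: $\divv {\bf u} = f_1$ disposes of the last term, while ${\bf u}_2 = -{\bf f}_2 - \nabla_{\bf x} u$ together with the triangle inequality gives $|{\bf u}_2|_{H^k(P)^d} \leq |{\bf f}_2|_{H^k(P)^d} + |\nabla_{\bf x} u|_{H^k(P)^d}$, and $P \subset J \times \omega_K$ absorbs the latter into the $\nabla_{\bf x} u$-term already present. There is no genuine obstacle; the only point requiring care is verifying that the three tensor-product semi-norms on $v_1$ are controlled by $|\nabla_{\bf x} v_1|_{H^k(J\times\omega_K)^d}$ rather than by a higher-order norm of $v_1$ itself, which is precisely the payoff the commuting diagram in Proposition~\ref{prop:commuting} was designed to deliver.
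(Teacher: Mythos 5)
Your proposal is correct and follows essentially the same route as the paper: specialize Proposition~\ref{prop1} with $\ell+1=k$ and the parameter choices $\ell'=k=k''$, $\ell'+1=k=k'$, and $\ell'=k=k'+1$ in the three bounds, then absorb the resulting mixed tensor-product semi-norms of $v_1$ into $|\nabla_{\bf x} v_1|_{H^k(J\times\omega_K)^d}$ (as the paper does, cf.\ Remark~\ref{rem:ell and k}) and the remaining ones into $|{\bf v}_2|_{H^k(P)^d}$ and $|\divv{\bf v}|_{H^k(P)}$. The specialization to ${\bf u}$ via $\divv{\bf u}=f_1$ and ${\bf u}_2=-\nabla_{\bf x}u-{\bf f}_2$ is likewise the paper's argument.
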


\begin{remark}\label{rem:ell and k}
 The upper bound on $\|\divv ({\bf v} -\breve{\cI}_{\ell,k}^{\cP} {\bf v})\|_{L_2(P)}$ from Proposition~\ref{prop1} does not depend on a norm of ${\bf v}_2$, but it does contain a term $h_K^{k''} |v_1|_{H^1(J)\otimes H^{k''}(\omega_K)}$ meaning that the global interpolant $\breve{\cI}_{\ell,k}^{\cP}$ does \emph{not} give rise to a (truly) commuting diagram. Considering the case that $\ell+1=k$, this upper bound for $\ell'=k=k''$ of order $h_J^k+h_K^k$ involves an $L_2$-norm of a partial derivative of $v_1$ of total order $k+1$, of which one order is in time and the other $k$ orders are in space.

Still for  $\ell+1=k$, the upper bound on $\|\nabla_{\bf x} (v_1 -\breve{\cI}_{k-1,k}^{\cP,1} v_1)\|_{L_2(P)^d}$ for $\ell'+1=k=k'$ from Proposition~\ref{prop1} of order $h_J^k+h_K^k$
involves two $L_2$-norms of partial derivatives of $v_1$ of total order $k+1$, one that has $k$ orders in time and one in space, and the other that has all $k+1$ orders in space.
For $v_1=u$ being a (non-smooth) solution of the heat equation with a (relatively) smooth forcing term $f$, temporal derivatives of $u$  can be expected to grow faster towards a singularity than spatial derivatives of the same order do. In view of this, the additional term in the upper bound of the interpolation error as a consequence of the absence of a truly commuting diagram is harmless.
Because of this, in Corollary~\ref{corol2}, we have bounded the three (local) $L_2$-norms of partial derivatives of $v_1$ from the upper bounds of Proposition~\ref{prop1}
by the single term $|\nabla_{\bf x} v_1|_{H^k(J \times \omega_K)^d}$.
\end{remark}

\subsection{General partition into prisms}\label{sec:general prisms}
When having the aim to allow local (adaptive) refinements, it is needed to consider prismatic partitions that are \emph{nonconforming}.
In this section, we therefore consider (essentially) disjoint partitions $\cP$ of the time-space cylinder $\overline{Q}$ into prisms $P=J \times K$, for closed intervals $J \subset I$ and uniformly shape-regular closed $d$-simplices $K \subset \Omega$, such that for any two different $P, P' \in \cP$, where $P'=J' \times K'$, if $\meas_d(P \cap P')>0$, then $|J| \eqsim |J'|$ and $|K| \eqsim |K'|$, 
and either $P$ and $P'$ share a common facet, or $P \cap P'$ is a complete facet $F$ of one the two prisms, say $P$, and a proper subset of a facet $F'$ of $P'$.
In the latter case we refer to $P'$ (or $F'$) as the \emph{master} of the \emph{slave} $P$ (or $F$).
We assume that $\cP$ is \emph{$1$-irregular} in the sense that if $F' \subset P'$ is a master facet, then it has non-empty intersection with possible slave facets of $P'$. 
A suitable refinement strategy generating partitions that satisfy all given requirements will be presented in Section~\ref{sec:adaptive algorithm}.
Facets that are parallel to the bottom $\{0\}\times \Omega$ of the time-space cylinder will be referred to as horizontal facets, and the other ones as lateral facets.
As in the case of having a conforming prismatic partition, we set
$$
\cS_{\ell,k}(\cP):=\{{\bf v}\in U\colon {\bf v}|_P \in \cS_{\ell,k}(P) \,(P \in \cP)\}.
$$

We have to adapt the definition of the quasi-interpolant $\breve{\cI}^{\cP}_{\ell,k}=\breve{\cI}_{\ell,k}^{\cP,1} \times \cI_{\ell,k}^{\cP,2}$ from \eqref{7} to generally nonconforming partitions.
We will construct an adapted quasi-interpolant, denoted by $\invbreve{\cI}^{\cP}_{\ell,k}=\invbreve{\cI}_{\ell,k}^{\cP,1} \times \invbreve{\cI}_{\ell,k}^{\cP,2}$,
that is a projector onto $\cS_{\ell,k}(\cP)$ such that, for $h_J \eqsim h_K$ and $\ell+1=k$ ($\ell=k \geq 1$ when $d=1$), an estimate like \eqref{6}  from Corollary~\ref{corol2}  is valid, albeit with stronger norms on ${\bf v}=(v_1,{\bf v}_2)$ in the upper bound.  

The definition $\invbreve{\cI}^{\cP}_{\ell,k}$ will show that for all $P \in \cP$ for which the local patch $\omega_P=\omega_P^\cP:=\cup\{P'\in \cP\colon P' \cap P \neq \emptyset\}$ is conforming, it holds that $(\invbreve{\cI}^{\cP}_{\ell,k} {\bf v})|_P=(\breve{\cI}^{\cP}_{\ell,k}{\bf v})|_P$ so that the favourable estimate \eqref{6} from Corollary~\ref{corol2} remains valid.


Although for the other $P\in \cP$ we will assume locally sufficient additional regularity of ${\bf v}$, a proof of the interpolation error in local $\|\cdot\|_{U,P}$-norm
being of order $h_P^k$ is nevertheless not immediate. Indeed, because $\cP_k(J \times K)^{d+1}$ is not included in our local finite element space $\cS_{k-1,k}(P)$, it is not a consequence of the Bramble--Hilbert lemma.\footnote{Literature on Raviart--Thomas or similar $H(\divv)$-elements w.r.t. nonconforming partitions seems scarce. In \cite{9.3}, $\RT_k$-elements w.r.t.~nonconforming quadrilateral partitions are considered. Interpolation error estimates of optimal order w.r.t.~$L_2$-norms are given (even with an explicit dependence of constants on $k$).
Such estimates for $H(\divv)$-norms are however missing (with the exception of \cite[Thm.~15]{9.3} based on a commuting diagram which, however, is not valid for nonconforming partitions). }
\bigskip

\subsubsection{Definition and analyis of $\invbreve{\cI}_{\ell,k}^{\cP,1}$}
Given $\ell\in \N_0$, $k \in \N$, we define the set of nodal points in $P \in \cP$ as the Cartesian product of the principal lattice of order $\ell+1$ on $J$ and the principal lattice of order $k$ on $K$.
A nodal point $\nu$ in $P$ that is not on a slave facet of $P$ is called a \emph{free node} in $P$.
Notice that a function in $\cS_{\ell,k}(P)$ is uniquely determined by its values in the nodal points in $P$, and that the restriction of such a function to a facet of $P$ is determined by its  values in the nodal points on that facet.

Let $v_1 \in C(\overline{I};L_1(\Omega))$, and let $\nu$ be a node of $P$. For $\nu \in P\setminus\partial P$, we set $(\invbreve{\cI}_{\ell,k}^{\cP,1} v_1)(\nu)$ $:=(\cI_{\ell,k}^{P,1} v_1)(\nu)$, and
for $\nu \in \overline{I} \times \partial\Omega$, we set $(\invbreve{\cI}_{\ell,k}^{\cP,1} v_1)(\nu):=0$.
For $\nu \in \partial P \setminus \partial\Omega$ being a free node in $P$, we define $(\invbreve{\cI}_{\ell,k}^{\cP,1} v_1)(\nu)$ as the average of $(\cI_{\ell,k}^{P',1} v_1)(\nu)$ over all $P' \in \cP$ in which $\nu$ is a free node.
In the remaining case of $\nu$ being on a slave facet $F$ of $P$ with master facet $F'$ of $P'$, 
we set $(\invbreve{\cI}_{\ell,k}^{\cP,1} v_1)(\nu)$ equal to the value at $\nu$ of $(\invbreve{\cI}_{\ell,k}^{\cP,1} v_1)|_{F'}$.
Note that the condition of $\cP$ being $1$-irregular ensures that all nodes of $P'$ that are on $F'$ are free so that the latter term is well-defined.

By construction, $\invbreve{\cI}_{\ell,k}^{\cP,1} v_1$ is continuous and vanishes at $I \times \partial\Omega$.
The same arguments used to demonstrate \eqref{20} together with standard inverse inequalities show that
for $P=J \times K$, $j \in \{0,\ldots,\min(\ell+1,k)\}$, and $v_1$ that vanishes at $I \times \partial\Omega$,
\begin{align} \label{10a}
\|\partial_t(v_1-\invbreve{\cI}_{\ell,k}^{\cP,1} v_1)\|_{L_2(P)} &\lesssim h_J^{-1} \max(h_J,h_K)^{j+1} |v_1|_{H^{j+1}(\omega_P)},\\ \label{11}
\|\nabla_{\bf x}(v_1-\invbreve{\cI}_{\ell,k}^{\cP,1} v_1)\|_{L_2(P)^d} &\lesssim h_K^{-1}\max(h_J,h_K)^{j+1} |v_1|_{H^{j+1}(\omega_P)},
\end{align}
Because we are going to estimate 
\be \label{12}
\begin{split}
\|\divv ({\bf v}&-\invbreve{\cI}_{\ell,k}^{\cP}{\bf v})\|_{L_2(P)} \leq \|\divv ({\bf v}-\cI_{\ell,k}^{P}{\bf v}|_P)\|_{L_2(P)} \\
&+\|\partial_t(\cI_{\ell,k}^{P,1} v_1|_{P}-\invbreve{\cI}_{\ell,k}^{\cP,1} v_1)\|_{L_2(P)}
+\|\divv_{\bf x}(\cI_{\ell,k}^{P,2} {\bf v}_2|_{P}-\invbreve{\cI}_{\ell,k}^{\cP,2} {\bf v}_2)\|_{L_2(P)},
\end{split}
\ee
 here we notice that similarly to \eqref{10a}, for $v_1$ that vanishes at $I \times \partial\Omega$ it holds that
\be \label{10b}
\|\partial_t(\cI_{\ell,k}^{P,1} v_1|_{P}-\invbreve{\cI}_{\ell,k}^{\cP,1} v_1)\|_{L_2(P)} \lesssim h_J^{-1} \max(h_J,h_K)^{j+1} |v_1|_{H^{j+1}(\omega_P)}.
\ee

\subsubsection{Definition and analysis of $\invbreve{\cI}_{\ell,k}^{\cP,2}$} \label{sec:modified}
Let $d \geq 2$. For some $s>2$, let ${\bf v}_2 \in L_1\big(I;L_s(\Omega)^d$  $\cap H(\divv;\Omega)\big)$.
Recalling that $\cI_{\ell,k}^{\cP,2} {\bf v}_2=(\cI_{\ell,k}^{P,2} {\bf v}_2|_P)_{P \in \cP}$, 
to ensure $L_2(I;H(\divv;\Omega))$-conformity of $\invbreve{\cI}_{\ell,k}^{\cP,2} {\bf v}_2$, 
for $F$ being a lateral slave facet of $P$ with corresponding master $P'$,
 in the DoFs of type
$\iint_{F} {\bf v}_2 (t,{\bf x})\cdot {\bf n}_{\bf x} \,q({\bf x}) p(t)\,d{\bf x}\,dt$ that define $\cI_{\ell,k}^{P,2} {\bf v}_2|_P$
we replace
 ${\bf v}_2|_F \cdot {\bf n}_{\bf x}$ by $(\cI_{\ell,k}^{P',2} {\bf v}_2|_{P'})|_F \cdot {\bf n}_{\bf x}$.
 Below, we derive local bounds for the quasi-interpolation error.
 
The DoFs for $\RT_k(K)$ given in \eqref{dof2} define a local quasi-interpolator as the sum over these DoFs of the product of the DoF and the associated dual basis function of $\RT_k(K)$.
If we restrict this sum to those DoFs that are associated to a facet $e$ of $K$, then the resulting operator, which we denote by $\cI_e$, satisfies
$$
\|\cI_e {\bf w}\|_{L_2(K)^d} \lesssim h_K^{d(\frac12-\frac{1}{s})} \|{\bf w}\|_{L_s(K)^d}+h_K \|\divv {\bf w}\|_{L_2(K)},
$$
as shown in \cite[Ch.~17, (17.20)]{70.97}.
Since $\cI_e {\bf w}$ depends on the normal trace of ${\bf w}$ on $e$ only, equally well it holds that
$$
\|\cI_e {\bf w}\|_{L_2(K)^d} \lesssim h_{K}^{d(\frac12-\frac{1}{s})} \|{\bf w}\|_{L_s(K'')^d}+h_K \|\divv {\bf w}\|_{L_2(K'')},
$$
where $K''$ is another uniformly shape-regular $d$-simplex that has $e$ as a face. By an application of an inverse inequality, it also shows that
\be \label{18}
\|\divv \cI_e {\bf w}\|_{L_2(K)} \lesssim h_{K}^{d(\frac12-\frac{1}{s})-1} \|{\bf w}\|_{L_s(K'')^d}+ \|\divv {\bf w}\|_{L_2(K'')}.
\ee

By extending this to the local quasi-interpolator on $P=J \times K$, with
$\cP_\ell(J) \otimes \RT_k(K)$ equipped with tensor DoFs built from \eqref{dof2a} and \eqref{dof2}, and lateral facet $F=J \times e$, we obtain 
for the restriction of this quasi-interpolator to the tensor DoF's associated to $F$, which we denote by $\cI_F$,
\be \label{2}
\|\cI_F {\bf w}\|_{L_2(P)^d} \lesssim h_{K}^{d(\frac12-\frac{1}{s})} \|{\bf w}\|_{L_2(J;L_s(K'')^d)}+h_K \|\divv_{\bf x} {\bf w}\|_{L_2(J \times K'')},
\ee
and
\be \label{3}
\|\divv_{\bf x} \cI_F {\bf w}\|_{L_2(P)} \lesssim h_{K}^{d(\frac12-\frac{1}{s})-1} \|{\bf w}\|_{L_2(J;L_s(K'')^d)}+ \|\divv_{\bf x} {\bf w}\|_{L_2(J \times K'')}.
\ee

\medskip

To proceed, it suffices to consider the situation that $P=J \times K$ has one lateral slave facet $J \times e$, where $P'=J' \times K'$ denotes the corresponding master.
 Then we can select a uniformly shape-regular $d$-simplex $K'' \subset K'$ that has facet $e$.
Recalling the definition of $\invbreve{\cI}_{\ell,k}^{\cP,2}$,  the application of \eqref{2} with ${\bf w}=(\identity - \cI_{\ell,k}^{P',2}){\bf v}_2$ yields
\be \label{8}
\begin{split}
&\|(\invbreve{\cI}_{\ell,k}^{\cP,2}  {\bf v}_2)|_{P} - \cI_{\ell,k}^{P,2}({\bf v}_2|_P)\|_{L_2(P)^d}  \\
&\quad\lesssim h_{K}^{d(\frac12-\frac{1}{s})} \|(\identity - \cI_{\ell,k}^{P',2}){\bf v}_2\|_{L_2(J';L_s(K')^d)}+h_K \|\divv_{\bf x} (\identity - \cI_{\ell,k}^{P',2}) {\bf v}_2\|_{L_2(P')}.
\end{split}
\ee
We estimate both terms in the upper bound separately.

Recalling that $\cI_{\ell,k}^{P',2}=Q_\ell^{J'} \otimes \rho_k^{K'}$, from $\divv \rho_k^{K'}=Q_k^{K'} \divv$ we have
$$
\divv_{\bf x} (\identity - \cI_{\ell,k}^{P',2})=\big((\identity -Q_\ell^{J'})\otimes \identity+Q_\ell^{J'} \otimes (\identity-Q_k^{K'})\big) \divv_{\bf x},
$$
which for $\ell'' \in \{0,\ldots,\ell+1\}$ and $k'' \in \{0,\ldots,k+1\}$ yields
\be \label{4}
\begin{split}
&\|\divv_{\bf x} (\identity - \cI_{\ell,k}^{P',2}) {\bf v}_2\|_{L_2(P')}\\
&\qquad \lesssim
h_{J'}^{\ell''} |\divv_{\bf x} {\bf v}_2|_{H^{\ell''}(J')\otimes L_2(K')}
+ h_K^{k''}  |\divv_{\bf x} {\bf v}_2|_{L_2(J')\otimes H^{k''}(K')}.
\end{split}
\ee

With the aid of \eqref{1}, valid for $s \in [1,\infty]$, one shows that for $k' \in \{0,\ldots,k\}$ and $\ell' \in \{0,\ldots,\ell+1\}$,
\be \label{5}
\begin{split}
&\|(\identity - \cI_{\ell,k}^{P',2}){\bf v}_2\|_{L_2(J';L_s(K')^d)}
\\ 
&\qquad\lesssim h_J^{\ell'} |{\bf v}_2|_{H^{\ell'}(J')\otimes L_s(K')^d}
+ h_K^{{k'}+1} |{\bf v}_2|_{L_2(J')\otimes W_s^{{k'}+1}(K')^d},
\end{split}
\ee
which generalizes this inequality for $s=2$ from Corollary~\ref{corol1}(iii).

By estimating
$$
\|(\identity-\invbreve{\cI}_{\ell,k}^{\cP,2}) {\bf v}_2\|_{L_2(P)^d} \leq \|(\identity - \cI_{\ell,k}^{P,2}){\bf v}_2\|_{L_2(P)^d}+\|(\invbreve{\cI}_{\ell,k}^{\cP,2}  {\bf v}_2)|_{P} - \cI_{\ell,k}^{P,2}({\bf v}_2|_P)\|_{L_2(P)^d},
$$
and using Corollary~\ref{corol1}(iii) to bound the first term, in combination with \eqref{8}, \eqref{4}, and \eqref{5} to bound the second one, we conclude that for $s > 2$,
$k''' \in \{0,\ldots,k\}$, and $\ell''' \in \{0,\ldots,\ell+1\}$,
\be \label{13}
\begin{split}
\|(\identity-&\invbreve{\cI}_{\ell,k}^{\cP,2}) {\bf v}_2\|_{L_2(P)^d} \lesssim 
h_J^{\ell'''} |{\bf v}_2|_{H^{\ell'''}(J)\otimes L_2(K)^d}
+ h_K^{{k'''}+1} |{\bf v}_2|_{L_2(J)\otimes H^{k'''+1}(K)^d}\\
&+ h_{K}^{d(\frac12-\frac{1}{s})}\Big[h_J^{\ell'} |{\bf v}_2|_{H^{\ell'}(J')\otimes L_s(K')^d)}
+ h_K^{{k'}+1} |{\bf v}_2|_{L_2(J')\otimes W_s^{k'+1}(K')^d}\Big]\\
& +h_K\Big[h_{J'}^{\ell''} |\divv_{\bf x} {\bf v}_2|_{H^{\ell''}(J')\otimes L_2(K')}
+ h_K^{k''} |\divv_{\bf x} {\bf v}_2|_{L_2(J')\otimes H^{k''}(K')} \Big].
\end{split}
\ee

The application of  \eqref{3} with ${\bf w}=(\identity - \cI_{\ell,k}^{P',2}){\bf v}_2$ gives
\be \label{19}
\begin{split}
\|\divv_{\bf x} (\invbreve{\cI}_{\ell,k}^{\cP,2} {\bf v}_2 - \cI_{\ell,k}^{P,2}{\bf v}_2|_P)\|_{L_2(P)} 
\lesssim &h_{K}^{d(\frac12-\frac{1}{s})-1} \|(\identity - \cI_{\ell,k}^{P',2}){\bf v}_2\|_{L_2(J';L_s(K')^d)}\\
&+ \|\divv_{\bf x} (\identity - \cI_{\ell,k}^{P',2}){\bf v}_2\|_{L_2(P')},
\end{split}
\ee
Using the bounds \eqref{5} and \eqref{4} for the norms at the right hand side, we conclude that
\be \label{14}
\begin{split}
\hspace*{-2em}\|\divv_{\bf x}(\invbreve{\cI}_{\ell,k}^{\cP,2} &{\bf v}_2 - \cI_{\ell,k}^{P,2}{\bf v}_2|_P)\|_{L_2(P)} 
\\
&\lesssim h_{K}^{d(\frac12-\frac{1}{s})-1} \Big[h_J^{\ell'} |{\bf v}_2|_{H^{\ell'}(J')\otimes L_{s}(K')^d}
+ h_K^{{k'}+1}  |{\bf v}_2(t,\cdot)|_{L_2(J')\otimes W_{s}^{{k'}+1}(K')^d}\Big]\\
&\quad + h_{J'}^{\ell''} |\divv_{\bf x} {\bf v}_2\|_{H^{\ell''}(J')\otimes L_2(K')}
+ h_K^{k''} |\divv_{\bf x} {\bf v}_2|_{L_2(J')\otimes H^{k''}(K')}.
\end{split}
\ee


\subsubsection{Wrap-up for the isotropic case} \label{sec:conclusions}
By taking $\ell+1=k$ in the isotropic case of $h_P :=h_J \eqsim h_K$, from \eqref{11} we have for $v_1$ that vanishes at $I \times \partial\Omega$,
\be \label{15}
\|\nabla_{\bf x}(v_1-\invbreve{\cI}_{k-1,k}^{\cP,1} v_1)\|_{L_2(P)^d} \lesssim h_{P}^k |v_1|_{H^{k+1}(\omega_P)}.
\ee
Recalling our assumption from Section~\ref{sec:modified} that $d \geq 2$,  by taking $s=s(d)=(\frac12-\frac{1}{d})^{-1} \in (2,\infty]$ and
with 
$\tilde{\Omega}_P=\tilde{\Omega}_P^\cP:=\{P' \in \cP\colon P \text{ has a lateral slave facet on } P'\}$
\be \label{16}
\begin{split}
\|(\identity-\invbreve{\cI}_{k-1,k}^{\cP,2}) {\bf v}_2&\|_{L_2(P)^d} \lesssim 
h_P^k\Big[ |{\bf v}_2|_{H^k(J)\otimes L_2(K)^d} +
  |{\bf v}_2|_{L_2(J)\otimes H^{k}(K)^d}+\\
\sum_{P'=J' \times K' \in\tilde{\Omega}_P}\Big\{  &|{\bf v}_2|_{H^{k-1}(J')\otimes L_s(K')^d}
+ |{\bf v}_2|_{L_2(J')\otimes W_s^{\max\{k-1,1\}}(K')^d}+\\
 &|\divv_{\bf x} {\bf v}_2|_{H^{k-1}(J')\otimes L_2(P')}
+  |\divv_{\bf x} {\bf v}_2|_{L_2(J')\otimes H^{k-1}(K')}\Big\} \Big].
\end{split}
\ee
Finally, the combination of \eqref{12}, Corollary~\ref{corol1}(i),  \eqref{10b}, and \eqref{14} shows that  for $v_1$ that vanishes at $I \times \partial\Omega$,
\be \label{17}
\begin{split}
 \|\divv ({\bf v} -\invbreve{\cI}_{k-1,k}^{\cP}  {\bf v})&\|_{L_2(P)}  \lesssim  h_P^k \Big[ |\divv {\bf v}|_{H^k(P)}+|v_1|_{H^{k+1}(\omega_P)}+\\
\sum_{P'=J'\times K' \in\tilde{\Omega}_P} \Big\{
& |{\bf v}_2|_{H^k(J')\otimes L_{s}(K')^d}
+ |{\bf v}_2|_{L_2(J')\otimes W_{s}^{k}(K')^d}+\\
& |\divv_{\bf x} {\bf v}_2|_{H^k(J')\otimes L_2(K')}
+ |\divv_{\bf x} {\bf v}_2|_{L_2(J')\otimes H^{k}(K')}
\Big\}\Big].
\end{split}
\ee
\emph{In conclusion:} Also for $P \in \cP$ for which the local patch $\omega_P^\cP$ is nonconforming, 
the sum of the upper bounds in \eqref{15}, \eqref{16}, and \eqref{17} gives an upper bound for $\|{\bf v} -\invbreve{\cI}_{k-1,k}^{\cP}  {\bf v}\|_{U,P}$ of order $h_P^k$, albeit under stronger smoothness assumptions than needed for \eqref{6} from Corollary~\ref{corol2} applicable when $\omega_P^\cP$ is conforming.

\subsubsection{The case $d=1$}
So far we have excluded the case $d=1$ from the definition and analysis of $\invbreve{\cI}^{\cP,2}_{\ell,k}$ in Section~\ref{sec:modified}, and so consequently from our conclusions presented in Section~\ref{sec:conclusions}.
Everything that was presented in Section~\ref{sec:modified}, however, is also valid for $d=1$, where the condition $s>2$ can even be read as $s\geq 2$.
Different from the $d \geq 2$ case, however, for $d=1$ the value of $s\in [2,\infty]$ cannot be chosen such that the exponent $d(\frac12-\frac{1}{s})-1$ in \eqref{18} is equal to $0$.
Via \eqref{3}, \eqref{13}, and \eqref{19}, it means that in \eqref{17} we do not obtain an upper bound of order $h_P^k$ unless we can take $\ell'=k+1$ (more precisely $\ell'=k+\frac12+\frac{1}{s}$)  in \eqref{5}. The latter however requires $\ell=k$ instead of $\ell+1=k$.

Taking therefore $\ell=k \geq 1$, i.e., $\cS_{k,k}(\cP)$ instead of $\cS_{k-1,k}(\cP)$ that was found most appropriate for the case of conforming partitions,  and $s=2$, similar to \eqref{15} and \eqref{16}, we obtain
\begin{align*}
\|\partial_x(v_1-\invbreve{\cI}_{k,k}^{\cP,1} v_1)\|_{L_2(P)} &\lesssim h_{P}^k |v_1|_{H^{k+1}(\omega_P)},\\
\|(\identity-\invbreve{\cI}_{k,k}^{\cP,2}) {\bf v_2}\|_{L_2(P)}  &\lesssim h_P^k \sum_{P' \in \tilde{\Omega}_P \cup\{P\}} |{\bf v}_2|_{H^k(P')},
\intertext{whereas instead of \eqref{17} we have}
\|\divv ({\bf v} -\invbreve{\cI}_{k,k}^{\cP}  {\bf v})\|_{L_2(P)}  &\lesssim  h_P^k \Big[ |\divv {\bf v}|_{H^k(P)}+|v_1|_{H^{k+1}(\omega_P)} + \sum_{P'\in\tilde{\Omega}_P} |{\bf v}_2|_{H^{k+1}(P')}\big].
\end{align*}

In our numerical experiments, however, we did not observe better results for $\cS_{k,k}(\cP)$ than for $\cS_{k-1,k}(\cP)$, and therefore we present only results for the latter option.


\section{Numerical results} \label{sec:numres}

\subsection{Adaptive algorithm}\label{sec:adaptive algorithm}

In the following numerical examples, we consider the least-squares formulation~\eqref{eq:first-order system} of the heat equation~\eqref{eq:heat} for $d=1$ and $d=2$. 
On prismatic partitions $\PP^\delta$ of the space-time cylinder $Q=I\times\Omega$ consisting of isotropic elements of the form $P=J \times K$ for closed intervals $J \subset I$ and closed $d$-simplices $K \subset \Omega$, we compute the corresponding least-squares approximation ${\bf u}^\delta:=\argmin_{{\bf v} \in \mathcal{S}_{0,1}(\PP^{\delta})}\|{\bf f}-G {\bf v}\|_L$ (i.e., $\ell+1=k$ as suggested in Remark~\ref{rem:ell and k}, and the lowest order case $k=1$). 
For convenience of the reader, we recall the definition of the discrete space
$$
\cS_{0,1}(\cP^\delta)=\{{\bf v}\in U\colon {\bf v}|_P \in \big(\cP_{1}(J) \otimes \cP_1(K)\big) \times \big(\cP_0(J) \otimes \RT_1(K)\big) \,(P \in \cP^\delta)\},
$$
where $\RT_1(K)$ should be read as $\cP_2(K)$ in the case that $d=1$.
Note that ${\bf u}^\delta$ can simply be computed as the solution of the finite-dimensional linear system
$$
\langle G{\bf u}^\delta, G{\bf v} \rangle_L = \langle {\bf f}, G {\bf v} \rangle_L \quad ({\bf v}\in \mathcal{S}_{0,1}(\PP^\delta)),
$$
where $\langle\cdot,\cdot\rangle_L$ denotes the scalar product on the $L_2$-type space $L=L_2(Q)\times L_2(Q)^d\times L_2(\Omega)$.
 
To measure the error, we use the reliable and efficient {\sl a posteriori} error estimator 
$$
 \eta({\bf f},{\bf v}):=  
 \| {\bf f} - G {\bf v} \|_{L} \eqsim \| {\bf u} - {\bf v}\|_U \quad ({\bf v}\in U),
$$
with corresponding local error indicators
$$
 \eta(P; {\bf f}, {\bf v}):=  \| {\bf f} - G {\bf v} \|_{L(P)} 
 \quad(P\in\PP^\delta),
$$
where
\begin{align*}
 L(P) := L_2(P)\times L_2(P)^d \times L_2(\partial P \cap (\{0\}\times \Omega)).
\end{align*}

While the a priori estimates of Section~\ref{sec:global space} only guarantee the convergence rate $\mathcal{O}({\rm dofs}^{-\frac1{d+1}})$ for a sequence of quasi-uniform meshes and sufficiently smooth solution ${\bf u}$, 
we will also investigate adaptive refinement of elements $\mathcal{M}^\delta\subseteq\PP^\delta$ with ``large'' error indicator for singular solutions ${\bf u}$. 
We use D\"orfler marking $ \theta\eta({\bf f},{\bf u}^\delta)^2 \le  \sum_{P\in\mathcal{M}^\delta}\eta(P;{\bf f}, {\bf u}^\delta)^2$ with some $0<\theta\le1$ to determine the marked elements. 

Starting from a conforming initial partition of $\overline{Q}$ into (closed) prisms, we consider the following refinement procedure: In the spatial direction a $d$-simplex to be refined is decomposed into $2^d$-simplices by some rule such that a uniform refinement of a conforming partition of $\overline{\Omega}$ results in a conforming decomposition, and a repeated application of this rule produces uniformly shape-regular simplices, and we combine this rule with bisection in the temporal direction.
Now the level of each prism that is produced is defined as the number of these product decompositions needed to create the prism from a prism in the initial partition, whose level is set to zero.
All prisms that can be produced in this way are uniformly shape-regular, and so in particular (uniformly) isotropic.

In our experiments for spatial domains of dimension $d=1$ or $d=2$, as the rule to refine a $d$-simplex we applied bisection or `red'-refinement, respectively.

In the case of local refinements, apart from the prisms marked for refinement, a minimal number of other prisms are refined such that the difference in levels of any two prisms in the new partition that have non-empty intersection is less or equal to one.
This condition guarantees that all partitions that can be created are $1$-irregular.


Overall, we thus consider the following adaptive algorithm.

\begin{algorithm}
\label{alg:adaptive}
\textbf{Input:} 
Right-hand side ${\bf f}\in L$, initial conforming mesh $\PP^0=\PP^{\delta_0}$, D\"orfler parameter $0<\theta\le1$.
\\
\textbf{Loop:} For each $j=0,1,2,\dots$, iterate the following steps {\rm(i)--(iv)}:
\begin{enumerate}[\rm(i)]
\item  Compute least-squares approximation ${\bf u}^j={\bf u}^{\delta_j} = \argmin_{{\bf v} \in \mathcal{S}_{0,1}(\PP^{j})}\|{\bf f}-G {\bf v}\|_L$ of ${\bf u}=(u,-\nabla_{\bf x} u-{\bf f}_2)$. %
\item Compute  error indicators $\eta({P;{\bf f}, {\bf u}^j})$ for all elements ${P}\in\PP^j=\PP^{\delta_j}$. %
\item\label{item:marking}  Determine a minimal set of marked elements $\mathcal{M}^j\subseteq\PP^j$ satisfying D\"orfler marking 
\begin{align*}
 \theta\,\eta({\bf f},{\bf u}^j)^2 \le  \sum_{P\in\mathcal{M}^j}\eta(P;{\bf f}, {\bf u}^j)^2.
\end{align*}
\item Generate refined prismatic mesh $\PP^{j+1}$ by refining at least all marked elements $\mathcal{M}^j$ as explained above. 
\end{enumerate}
\textbf{Output:} Refined meshes $\PP^j$, corresponding exact discrete solutions ${\bf u}^j$, and 
error estimators $\eta({\bf f}, {\bf u}^j)$ for all $j \in \N_0$.
\end{algorithm}

For comparison, we also consider the adaptive algorithm of \cite{75.257}, i.e., Algorithm~\ref{alg:adaptive} for conforming simplicial meshes $\TT^\delta$ instead of prismatic meshes $\PP^\delta$, refined by newest vertex bisection~\cite{249.87}, where, as in \cite{75.257}, we refine marked elements into 
$2^{d+1}$ elements by repeated bisection, with associated finite element space 
$$
\{{\bf v}\in C^0(Q)\times C^0(Q)^d\colon {\bf v}|_T \in \cP_1(T) \times  \cP_1(T)^d \,(T \in \TT^\delta), v_1|_{I\times\partial\Omega} = 0\}\subset U.
$$

Although, we present numerical comparisons only for some of the test examples of \cite{75.257} (plus some new ones),
we emphasize that we computed numerical results for all these examples and observed that in each case our prismatic finite element space $\cS_{0,1}(\cP^\delta)$ gave either better or the same convergence rates of the estimator, for both uniform refinement, where in each step all elements are marked for refinement, and adaptive refinement.

\begin{remark}
In \cite{75.28}, we proved plain convergence 
$$
\lim_{j\to\infty}\eta({\bf f},{\bf u}^j) = 0= \lim_{j\to\infty}\|{{\bf u}-\bf u}^j\|_U
$$ 
of the adaptive algorithm from \cite{75.257}, even for arbitrary polynomial degree of the employed discrete functions. 
Exploiting the local approximation properties derived in Section~\ref{sec:general prisms}, the proof easily extends to Algorithm~\ref{alg:adaptive} (with prismatic meshes), even for arbitrary polynomial degrees $\ell\in\N_0$, $k\in\N$ in the discrete space $\mathcal{S}_{\ell,k}(\PP^j)$ (instead of $\mathcal{S}_{0,1}(\PP^j)$). 
Convergence is even satisfied if the marking step~\eqref{item:marking} is replaced by  determining a (not necessarily minimal) set of  marked elements $\mathcal{M}^j\subseteq\PP^j$ with the property
$$
 \max_{P\in\PP^j\setminus\mathcal{M}^j} \eta(P;{\bf f}, {\bf u}^j) \le M(\max_{P\in\mathcal{M}^j} \eta(P;{\bf f}, {\bf u}^j))
$$
for some fixed marking function $M:[0,\infty)\to[0,\infty)$ that is continuous at $0$ with $M(0)=0$.
\end{remark}

\subsection{Experiments in 1+1D}
For the following experiments, we fix the domain $\Omega:=(0,1)$ and the end time point $T:=1$. 
As initial prismatic mesh of the resulting space-time cylinder $Q$, we take $\PP^0=\{\overline Q\}$. 
For the initial triangular mesh $\TT^0$, we split $\overline Q$ into four  triangles. 
We consider uniform refinement, where in each step all elements are marked for refinement, and adaptive refinement with  D\"orfler parameter $\theta=0.5$.  

\subsubsection{Non-matching initial datum}\label{sec:2d_f12_u01}
We consider \cite[Example 4]{75.257}, i.e., 
$$
{\bf f}=(f_1,{\bf f}_2,u_0)=(2,0,1).
$$ 
Note that the initial datum does not match with the homogenous Dirichlet boundary conditions so that the solution $u$ of the heat equation and the associated ${\bf u}=(u,-\nabla_x u)$ are singular for $(t,x)\in\{(0,0),(0,1)\}$. 
Figure~\ref{fig:2d_f12_u01} displays the resulting error estimators $\eta$ for simplicial and prismatic meshes, and uniform and adaptive refinement vs.~the degrees of freedom.  
The rates $0.08$ and $0.17$ for simplicial meshes have been already reported in \cite{75.257}. 
For prismatic meshes, we observe significantly improved rates $0.13$ and $0.43$. 
As mentioned before, the optimal rate for smooth solutions would be $1/2$.

\begin{center}
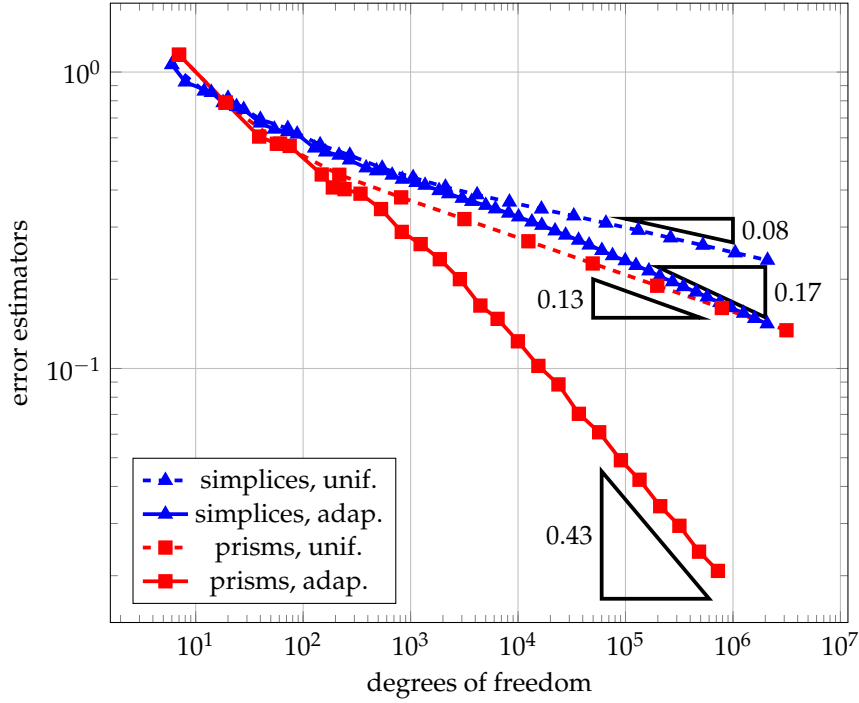
\begin{figure}
\begin{tikzpicture}
\begin{loglogaxis}[
width = 0.9\textwidth,
clip = true,
xlabel= {degrees of freedom},
ylabel= {error estimators},
grid = major,
legend pos = south west,
legend entries = {simplices, unif. \\simplices, adap. \\ prisms, unif. \\prisms, adap. \\} 
]
\addplot[line width = 0.5mm,color=blue,mark=triangle*,dashed,mark options=solid] table[x=dofs,y=estimators, col sep=comma] {data/simp2d/f2_u01_theta100.csv};
\addplot[line width = 0.5mm,color=blue,mark=triangle*] table[x=dofs,y=estimators, col sep=comma] {data/simp2d/f2_u01_theta50.csv};
\addplot[line width = 0.5mm,color=red,mark=square*,dashed,mark options=solid] table[x=dofs,y=estimators, col sep=comma] {data/prism2d/f2_u01_pxs2_theta100.csv};
\addplot[line width = 0.5mm,color=red,mark=square*] table[x=dofs,y=estimators, col sep=comma] {data/prism2d/f2_u01_pxs2_theta50.csv};

\convtri{100000}{0.32}{0.08}{0.08}
\convtri{200000}{0.22}{0.17}{0.17}
\convtriinv{50000}{0.2}{0.13}{0.13}
\convtriinv{60000}{0.045}{0.43}{0.43}

\end{loglogaxis}
\end{tikzpicture}
\caption{\label{fig:2d_f12_u01} Convergence plot of Section~\ref{sec:2d_f12_u01}  for spatial domain $\Omega=(0,1)$, right-hand side $(f_1,{\bf f}_2,u_0) = (2,0,1)$, simplicial and prismatic meshes, and uniform and adaptive refinement.}
\end{figure}
\end{center}

\subsubsection{Initial datum with a ``singularity'' in the interior}\label{sec:2d_f10_u0hat}
We consider \cite[Example~2]{75.257}, i.e., 
$$
(f_1,{\bf f}_2,u_0) = \big(1,0,x\mapsto 1-2\big|x-\tfrac12\big|\big).
$$
Clearly, the (at $x=\tfrac12$)  irregular initial datum yields a reduced regularity of the unknown solution  $u$ of the heat equation and the associated ${\bf u}=(u,-\nabla_x u)$. 
Figure~\ref{fig:2d_f10_u0hat} displays the resulting error estimators $\eta$ for simplicial and prismatic meshes, and uniform and adaptive refinement over the degrees of freedom.  
The rates $0.25$ and $0.42$ for simplicial meshes have been already observed in \cite{75.257}. 
Only adaptive refinement for prismatic meshes leads to the optimal rate $1/2$, while uniform refinement yields the rate $0.38$.

\begin{center}
\begin{figure}
\begin{tikzpicture}
\begin{loglogaxis}[
width = 0.9\textwidth,
clip = true,
xlabel= {degrees of freedom},
ylabel= {error estimators},
grid = major,
legend pos = south west,
legend entries = {simplices, unif. \\simplices, adap. \\ prisms, unif. \\prisms, adap. \\} 
]
\addplot[line width = 0.5mm,color=blue,mark=triangle*,dashed,mark options=solid] table[x=dofs,y=estimators, col sep=comma] {data/simp2d/f1_u0hat_theta100.csv};
\addplot[line width = 0.5mm,color=blue,mark=triangle*] table[x=dofs,y=estimators, col sep=comma] {data/simp2d/f1_u0hat_theta50.csv};
\addplot[line width = 0.5mm,color=red,mark=square*,dashed,mark options=solid] table[x=dofs,y=estimators, col sep=comma] {data/prism2d/f1_u0hat_pxs2_theta100.csv};
\addplot[line width = 0.5mm,color=red,mark=square*] table[x=dofs,y=estimators, col sep=comma] {data/prism2d/f1_u0hat_pxs2_theta50.csv};

\convtri{200000}{0.04}{0.25}{0.25}
\convtriinv{700000}{0.006}{0.38}{0.38}
\convtri{300000}{0.016}{0.42}{0.42}
\convtriinv{200000}{0.0027}{0.5}{0.5}

\end{loglogaxis}
\end{tikzpicture}
\caption{\label{fig:2d_f10_u0hat} Convergence plot of Section~\ref{sec:2d_f10_u0hat}  for spatial domain $\Omega=(0,1)$, right-hand side $(f_1,{\bf f}_2,u_0) = \big(1,0,x\mapsto 1-2\big|x-\tfrac12\big|\big)$, simplicial and prismatic meshes, and uniform and adaptive refinement.}
\end{figure}
\end{center}

\subsubsection{Initial datum with a ``singularity'' at the boundary}\label{sec:2d_f10_u0x05}
We consider
$$
(f_1,{\bf f}_2,u_0) = (0,0,x\mapsto x^{1/2}(1-x))
$$
Clearly, the (at $x=0$) irregular initial datum yields a reduced regularity of the unknown solution $u$ of the heat equation and the associated ${\bf u}=(u,-\nabla_x u)$. 
Figure~\ref{fig:2d_f10_u0x05} displays the resulting error estimators $\eta$ for simplicial and prismatic meshes, and uniform and adaptive refinement over the degrees of freedom.  
As in the previous example, only adaptive refinement for prismatic meshes leads to the optimal rate $1/2$, while uniform refinement yields the rate $0.26$.
For simplicial meshes, we observe the rates $0.19$ and $0.33$. 

\begin{center}
\begin{figure}
\begin{tikzpicture}
\begin{loglogaxis}[
width = 0.9\textwidth,
clip = true,
xlabel= {degrees of freedom},
ylabel= {error estimators},
grid = major,
legend pos = south west,
legend entries = {simplices, unif. \\simplices, adap. \\ prisms, unif. \\prisms, adap. \\} 
]
\addplot[line width = 0.5mm,color=blue,mark=triangle*,dashed,mark options=solid] table[x=dofs,y=estimators, col sep=comma] {data/simp2d/f0_u0x05_theta100.csv};
\addplot[line width = 0.5mm,color=blue,mark=triangle*] table[x=dofs,y=estimators, col sep=comma] {data/simp2d/f0_u0x05_theta50.csv};
\addplot[line width = 0.5mm,color=red,mark=square*,dashed,mark options=solid] table[x=dofs,y=estimators, col sep=comma] {data/prism2d/f0_u0x05_pxs2_theta100.csv};
\addplot[line width = 0.5mm,color=red,mark=square*] table[x=dofs,y=estimators, col sep=comma] {data/prism2d/f0_u0x05_pxs2_theta50.csv};

\convtri{100000}{0.045}{0.19}{0.19}
\convtriinv{100000}{0.011}{0.33}{0.33}
\convtri{300000}{0.014}{0.26}{0.26}
\convtriinv{100000}{0.002}{0.5}{0.5}

\end{loglogaxis}
\end{tikzpicture}
\caption{\label{fig:2d_f10_u0x05} Convergence plot of Section~\ref{sec:2d_f10_u0x05}  for spatial domain $\Omega=(0,1)$, right-hand side $(f_1,{\bf f}_2,u_0) = (0,0,x\mapsto x^{1/2}(1-x))$, simplicial and prismatic meshes, and uniform and adaptive refinement.}
\end{figure}
\end{center}

\subsection{Experiments in 2+1D}
For the following experiments, we fix the domain $\Omega:=(0,1)^2$ and the end time point $T:=1$. 
The initial prismatic mesh of the resulting space-time cylinder $Q$ is obtained by splitting $\overline Q$ into two prisms with triangular base. 
For the initial tetrahedral mesh $\TT^0$, we split $\overline Q$ into twelve tetrahedra. 
We consider uniform refinement, where in each step all elements are marked for refinement, and adaptive refinement with  D\"orfler parameter $\theta=0.5$.

\subsubsection{Non-matching initial datum}\label{sec:3d_f10_u01}
We consider \cite[Example 7]{75.257}, i.e., 
$$
{\bf f}=(f_1,{\bf f}_2,u_0)=(0,0,1).
$$ 
Note that the initial datum does not match with the homogenous Dirichlet boundary conditions so that the solution $u$ of the heat equation and the associated ${\bf u}=(u,-\nabla_x u)$ are singular for $(t,x)\in \{0\}\times\partial\Omega$. 
Figure~\ref{fig:3d_f10_u01} displays the resulting error estimators $\eta$ for simplicial and prismatic meshes, and uniform and adaptive refinement vs.~the degrees of freedom.  
The rates $0.07$ and $0.08$ for simplicial meshes have been already reported in \cite{75.257}. 
For prismatic meshes, we observe somewhat
improved rates $0.09$ and $0.14$. 
As mentioned before, the optimal rate for smooth solutions would be $1/3$.
The reason for the low rates even for adaptively refined prismatic meshes is likely due to the edge singularities of the solution requiring anisotropic refinement in both space and time.

\begin{center}
\begin{figure}
\begin{tikzpicture}
\begin{loglogaxis}[
width = 0.9\textwidth,
clip = true,
xlabel= {degrees of freedom},
ylabel= {error estimators},
grid = major,
legend pos = south west,
legend entries = {simplices, unif. \\simplices, adap. \\ prisms, unif. \\prisms, adap. \\} 
]
\addplot[line width = 0.5mm,color=blue,mark=triangle*,dashed,mark options=solid] table[x=dofs,y=estimators, col sep=comma] {data/simp3d/f0_u01_theta100.csv};
\addplot[line width = 0.5mm,color=blue,mark=triangle*] table[x=dofs,y=estimators, col sep=comma] {data/simp3d/f0_u01_theta50.csv};
\addplot[line width = 0.5mm,color=red,mark=square*,dashed,mark options=solid] table[x=dofs,y=estimators, col sep=comma] {data/prism3d/f0_u01_pxs2_theta100.csv};
\addplot[line width = 0.5mm,color=red,mark=square*] table[x=dofs,y=estimators, col sep=comma] {data/prism3d/f0_u01_pxs2_theta50.csv};

\convtriinv{100000}{0.37}{0.14}{0.14}
\convtriinv{100000}{0.37}{0.14}{0.14}
\convtri{100000}{0.65}{0.07}{0.07}
\convtri{300000}{0.47}{0.09}{0.09}

\end{loglogaxis}
\end{tikzpicture}
\caption{\label{fig:3d_f10_u01} Convergence plot of Section~\ref{sec:3d_f10_u01}  for spatial domain $\Omega=(0,1)^2$, right-hand side $(f_1,{\bf f}_2,u_0) = (0,0,1)$, simplicial and prismatic meshes, and uniform and adaptive refinement.}
\end{figure}
\end{center}

\subsubsection{Initial datum with a ``singularity'' at an interior point}\label{sec:3d_f0_u0r1}
We consider
$$
(f_1,{\bf f}_2,u_0) = \big(0,0,{\bf x}\mapsto \sqrt{(x_1-\tfrac12)^2+(x_2-\tfrac12)^2} \sin(\pi x_1) \sin(\pi x_2)\big). 
$$
Clearly, the (at ${\bf x}=(\tfrac12,\tfrac12)$) irregular initial datum yields a reduced regularity of the unknown solution  $u$ of the heat equation and the associated ${\bf u}=(u,-\nabla_x u)$. 
Figure~\ref{fig:3d_f0_u0r1} displays the resulting error estimators $\eta$ for simplicial and prismatic meshes, and uniform and adaptive refinement over the degrees of freedom.  
Only adaptive refinement for prismatic meshes leads to the optimal rate $1/3$, while uniform refinement yields the rate $0.27$.
For simplicial meshes, we observe the rates $0.13$ and~$0.18$. 

\begin{center}
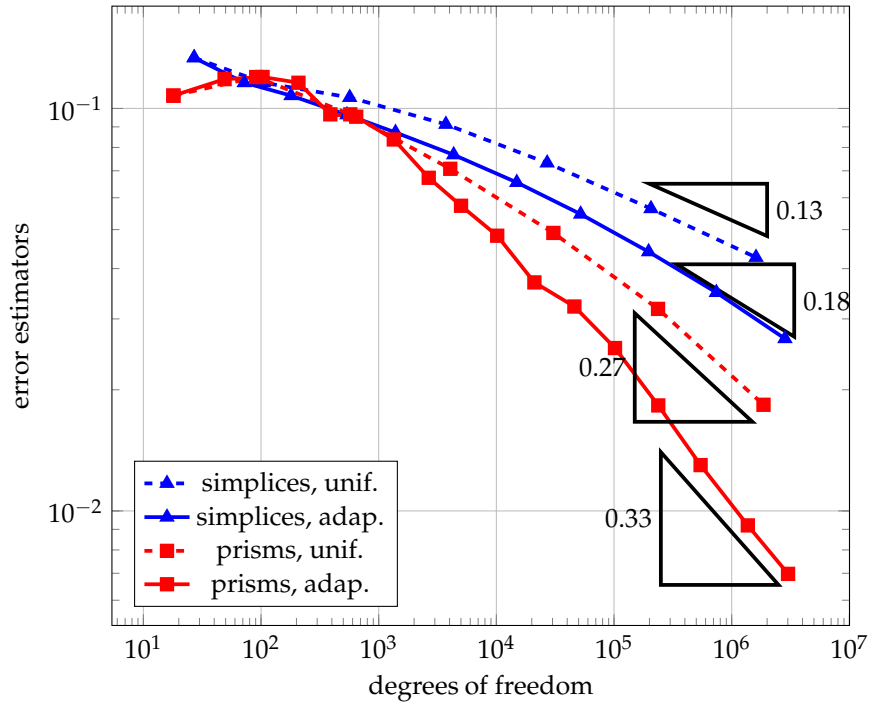
\begin{figure}
\begin{tikzpicture}
\begin{loglogaxis}[
width = 0.9\textwidth,
clip = true,
xlabel= {degrees of freedom},
ylabel= {error estimators},
grid = major,
legend pos = south west,
legend entries = {simplices, unif. \\simplices, adap. \\ prisms, unif. \\prisms, adap. \\} 
]
\addplot[line width = 0.5mm,color=blue,mark=triangle*,dashed,mark options=solid] table[x=dofs,y=estimators, col sep=comma] {data/simp3d/f0_u0r1_theta100.csv};
\addplot[line width = 0.5mm,color=blue,mark=triangle*] table[x=dofs,y=estimators, col sep=comma] {data/simp3d/f0_u0r1_theta50.csv};
\addplot[line width = 0.5mm,color=red,mark=square*,dashed,mark options=solid] table[x=dofs,y=estimators, col sep=comma] {data/prism3d/f0_u0r1_pxs2_theta100.csv};
\addplot[line width = 0.5mm,color=red,mark=square*] table[x=dofs,y=estimators, col sep=comma] {data/prism3d/f0_u0r1_pxs2_theta50.csv};

\convtri{200000}{0.065}{0.13}{0.13}
\convtri{340000}{0.041}{0.18}{0.18}
\convtriinv{150000}{0.031}{0.27}{0.27}
\convtriinv{250000}{0.014}{0.33}{0.33}

\end{loglogaxis}
\end{tikzpicture}
\caption{\label{fig:3d_f0_u0r1} Convergence plot of Section~\ref{sec:3d_f0_u0r1}  for spatial domain $\Omega=(0,1)^2$, right-hand side $(f_1,{\bf f}_2,u_0) = \big(0,0, {\bf x}\mapsto \sqrt{(x_1-\tfrac12)^2+(x_2-\tfrac12)^2} \sin(\pi x_1) \sin(\pi x_2)\big) 
$, simplicial and prismatic meshes, and uniform and adaptive refinement.}
\end{figure}
\end{center}

\subsubsection{Initial datum with a ``singularity'' at a boundary edge}\label{sec:3d_f10_u0x075}
We consider
$$
(f_1,{\bf f}_2,u_0) = (0,0,{\bf x}\mapsto x_1^{3/4}(1-x_1)x_2(1-x_2))
$$
Clearly, the (at $x_1=0$) irregular initial datum yields a reduced regularity of the unknown solution  $u$ of the heat equation and the associated ${\bf u}=(u,-\nabla_x u)$. 
Figure~\ref{fig:3d_f10_u0x075} displays the resulting error estimators $\eta$ for simplicial and prismatic meshes, and uniform and adaptive refinement over the degrees of freedom.  
As in the previous example, only adaptive refinement for prismatic meshes leads to the optimal rate $1/3$, while uniform refinement yields the rate $0.3$.
For simplicial meshes, we observe the rates $0.23$ and~$0.27$. 

\begin{center}
\begin{figure}
\begin{tikzpicture}
\begin{loglogaxis}[
width = 0.9\textwidth,
clip = true,
xlabel= {degrees of freedom},
ylabel= {error estimators},
grid = major,
legend pos = south west,
legend entries = {simplices, unif. \\simplices, adap. \\ prisms, unif. \\prisms, adap. \\} 
]
\addplot[line width = 0.5mm,color=blue,mark=triangle*,dashed,mark options=solid] table[x=dofs,y=estimators, col sep=comma] {data/simp3d/f0_u0x075_theta100.csv};
\addplot[line width = 0.5mm,color=blue,mark=triangle*] table[x=dofs,y=estimators, col sep=comma] {data/simp3d/f0_u0x075_theta50.csv};
\addplot[line width = 0.5mm,color=red,mark=square*,dashed,mark options=solid] table[x=dofs,y=estimators, col sep=comma] {data/prism3d/f0_u0x075_pxs2_theta100.csv};
\addplot[line width = 0.5mm,color=red,mark=square*] table[x=dofs,y=estimators, col sep=comma] {data/prism3d/f0_u0x075_pxs2_theta50.csv};

\convtri{200000}{0.016}{0.23}{0.23}
\convtri{460000}{0.008}{0.27}{0.27}
\convtriinv{45000}{0.008}{0.3}{0.3}
\convtriinv{100000}{0.0037}{0.33}{0.33}

\end{loglogaxis}
\end{tikzpicture}
\caption{\label{fig:3d_f10_u0x075} Convergence plot of Section~\ref{sec:3d_f10_u0x075}  for spatial domain $\Omega=(0,1)^2$, right-hand side $(f_1,{\bf f}_2,u_0) = (0,0,{\bf x}\mapsto x_1^{3/4}(1-x_1)x_2(1-x_2))$, simplicial and prismatic meshes, and uniform and adaptive refinement.}
\end{figure}
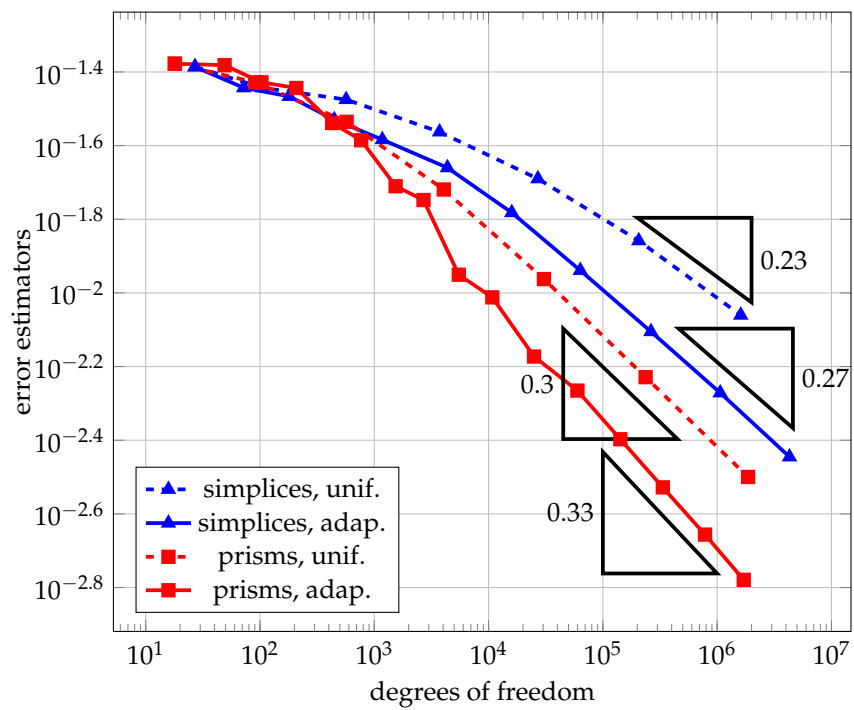
\end{center}

\bibliographystyle{alpha}
\bibliography{../ref}
\end{document}